\newtheorem{theorem}{Theorem}[section]
\theoremstyle{definition}
\newtheorem{definition}[theorem]{Definition}
\newtheorem{example}[theorem]{Example}
\newtheorem{property}[theorem]{Property}
\theoremstyle{remark}
\numberwithin{equation}{section}
\begin{document}

\setcounter{page}{1}

\title[Blowing-up solutions ...]{Blowing-up solutions of the time-fractional dispersive equations}

\author[B. Ahmad, A. Alsaedi, M. Kirane \MakeLowercase{and} B. T. Torebek]{B. Ahmad, A. Alsaedi,  M. Kirane \MakeLowercase{and} Berikbol T. Torebek}

\address{\textcolor[rgb]{0.00,0.00,0.84}{Bashir Ahmad \newline NAAM Research Group, Department of Mathematics, \newline Faculty of Science, King Abdulaziz University, \newline P.O. Box 80203, Jeddah 21589, Saudi Arabia}}
\email{\textcolor[rgb]{0.00,0.00,0.84}{bashirahmad\_qau@yahoo.com}}

\address{\textcolor[rgb]{0.00,0.00,0.84}{Ahmed Alsaedi \newline NAAM Research Group, Department of Mathematics, \newline Faculty of Science, King Abdulaziz University, \newline P.O. Box 80203, Jeddah 21589, Saudi Arabia}}
\email{\textcolor[rgb]{0.00,0.00,0.84}{aalsaedi@hotmail.com}}

\address{\textcolor[rgb]{0.00,0.00,0.84}{Mokhtar Kirane\newline LaSIE, Facult\'{e} des Sciences, \newline Pole Sciences et Technologies, Universit\'{e} de La Rochelle \newline Avenue M. Crepeau, 17042 La Rochelle Cedex, France \newline NAAM Research Group, Department of Mathematics, \newline Faculty of Science, King Abdulaziz University, \newline P.O. Box 80203, Jeddah 21589, Saudi Arabia}}
\email{\textcolor[rgb]{0.00,0.00,0.84}{mkirane@univ-lr.fr}}

\address{\textcolor[rgb]{0.00,0.00,0.84}{Berikbol T. Torebek \newline Al--Farabi Kazakh National University \newline Al--Farabi ave. 71, 050040, Almaty, Kazakhstan \newline Institute of
Mathematics and Mathematical Modeling \newline 125 Pushkin str.,
050010 Almaty, Kazakhstan \newline RUDN University, 6 Miklukho-Maklay St., 117198 Moscow, Russia}}
\email{\textcolor[rgb]{0.00,0.00,0.84}{torebek@math.kz}}

%\dedicatory{This paper is dedicated to Professor ABCD}
\thanks{All authors contributed equally to the manuscript and read and approved the final manuscript.}

\let\thefootnote\relax\footnote{$^{*}$Corresponding author}

\subjclass[2010]{Primary 35B50; Secondary 26A33, 35K55, 35J60.}

\keywords{Caputo derivative; Burgers equation; Korteweg-de Vries equation; Benjamin-Bona-Mahony equation; Camassa-Holm equation, Rosenau equation, Ostrovsky equation; blow-up.}

\begin{abstract}This paper is devoted to the study of initial-boundary value problems for  time-fractional analogues of Korteweg-de Vries, Benjamin-Bona-Mahony, Burgers, Rosenau, Camassa-Holm, Degasperis-Procesi, Ostrovsky and time-fractional modified Korteweg-de Vries-Burgers equations on a bounded domain. Sufficient conditions for the blowing-up of solutions in finite time of aforementioned equations are presented. We also discuss the maximum principle and influence of gradient non-linearity on the global solvability of initial-boundary value problems for the time-fractional Burgers equation. The main tool of our study is the Pohozhaev nonlinear capacity method. We also provide some illustrative examples.
\end{abstract} \maketitle
\tableofcontents
\section{Introduction}
Nonlinear wave phenomenon is one of the important areas of scientific investigation. Among the mathematical models describing the dynamics of wave equations include Korteweg-de Vries equation, Burgers equation, Benjamin-Bona-Mahony equation, Rosenau equation and Ostrovsky equation.

Bateman-Burgers equation or Burgers equation \cite{Bat15, Bur48}
\begin{equation}\label{BB}u_t+uu_x=\nu u_{xx},\,\,\nu>0,\end{equation}
is a fundamental partial differential equation occurring in various areas of applied mathematics, such as fluid mechanics, nonlinear acoustics, gas dynamics, traffic flow.

The Korteweg-de Vries equation \cite{KV95} is well known in different fields of science and technology, it reads
\begin{equation}\label{KdV}u_t+uu_x+u_{xxx}=0.\end{equation}

In \cite{BBM72}, Benjamin, Bona and Mahony proposed the following equation to describe long waves on the water surface
\begin{equation}\label{BBM}u_t-u_{txx}+uu_x=0.\end{equation}
In \cite{Ros86} Rosenau suggested the following equation to describe waves on ``shallow'' water:
\begin{equation}\label{Ros}u_t+u_{txxxx}+u_x+uu_x=0.\end{equation}
In \cite{Ost78} Ostrovsky derived an equation for weakly nonlinear surface and internal waves in a rotating ocean
\begin{equation}\label{Ost}u_{tx}+u_{xx}+u_{xxxx}+(uu_x)_x=0.\end{equation}
The Camassa-Holm equation
\begin{equation}\label{CH}u_{t}-u_{txx}+2\kappa u_{x}+3uu_x=2u_xu_{xx}+uu_{xxx},\,\kappa>0,\end{equation}
was introduced by Camassa and Holm \cite{CH93} as a bi-Hamiltonian model for waves in shallow water and the following Degasperis-Procesi equation, one of the important model of mathematical physics
\begin{equation}\label{DP}u_{t}-u_{txx}+2\kappa u_{x}+4uu_x=3u_xu_{xx}+uu_{xxx},\,\kappa>0.\end{equation}
The Korteweg-de Vries-Burgers equation
\begin{equation}\label{KdV-BB}u_t+uu_x+u_{xxx}=\nu u_{xx},\,\,\,\nu>0,\end{equation}
modified Korteweg-de Vries-Burgers equation
\begin{equation}\label{mKdV}u_t+u^2u_x+u_{xxx}=\nu u_{xx},\,\,\,\nu>0,\end{equation}
Benjamin-Bona-Mahony-Burgers equation
\begin{equation}\label{BBM-BB}u_t-u_{txx}+uu_x=\nu u_{xx},\end{equation}
Korteweg-de Vries-Benjamin-Bona-Mahony equation
\begin{equation}\label{KdV-BBM}u_t-u_{txx}+u_{xxx}+uu_x=0,\end{equation}
Rosenau-Burgers equation
\begin{equation}\label{Ros-BB}u_t+u_{txxxx}+u_x+uu_x=\nu u_{xx},\end{equation}
Rosenau-Korteweg-de Vries equation
\begin{equation}\label{Ros-KdV}u_t+u_{txxxx}+u_{xxx}+u_x+uu_x=0,\end{equation}
Rosenau-Benjamin-Bona-Mahony equation
\begin{equation}\label{Ros-BBM}u_t-u_{txx}+u_{txxxx}+u_x+uu_x=0,\end{equation}
have important applications in different physical situations such as waves on shallow water, and processes in semiconductors with differential conductivity \cite{BS76, FPS01, Ros89, Shu87, SG69, Zh05}.

This paper is devoted to blowing-up solutions of time-fractional analogues of the above equations. The approach to the problem
is based on the Pohozhaev nonlinear capacity method \cite{MP98, MP01, MP04}; more precisely, on the choice of test functions according to initial and boundary conditions under
consideration.

Here, we give a simple case of the analysis of a rough blow-up, i.e., the case where the solution tends to infinity as $t \rightarrow T^*$ on $[0,L]$; more exactly, when the integral
$$\int\limits_0^L u(x, t)\phi(x)dx$$ tends to infinity as $t \rightarrow T^*$ for the given function $\phi.$

In \cite{Kor12a, Kor12b, KP13, KY14, KY15} Korpusov et al. obtained sufficient conditions for the finite time blow-up of solutions of initial-boundary problems for Burgers, Korteweg-de Vries, Benjamin-Bona-Mahony and Rosenau type equations. We also note that the blow-up of solutions of the initial problems for the Korteweg-de Vries and critical Korteweg-de Vries equations are investigated in \cite{MM02, MM14, MMR14, Po10, Po10a, Po11, Po11a, Po12, Po12a, Po12b}. Blow-up of solutions of the initial problems for the Ostrovsky equation is proved in \cite{LPS10}.

Descriptions of some physical applications and numerical simulations of the time-fractional dispersive equations are given in \cite{FH18, HHG19, LZR16, LV18, QTWZ17, SBA18, SB12, XA13, Yok18}.

Recently, the study of blowing-up solutions of time-fractional nonlinear partial differential equations received great attention. For example, the authors of this paper obtained results on the blow-up of the solutions of time-fractional Burgers equation \cite{AKT19a, Tor19} and fractional reaction-diffusion equation \cite{AKT19b}. We note that the blow-up of the solution of various nonlinear fractional problems was investigated in \cite{AAAKT15, AAKMA17, CSWSS18, KNS08, Pav18, XX18}.

Let us briefly describe the problems investigated in this paper:
\begin{itemize}
  \item Blowing-up solutions of the time-fractional Rosenau-KdV-BBM-Burgers equation with initial conditions described as follows:
\begin{equation*}
\begin{split}
&\partial_{+0,t}^\alpha (u-au_{xx}+bu_{xxxx})+cu_{xxx}-du_{xx}+u_x+uu_{x}=0,\,0<x<L,\,t>0,\\
&u(x,0)=u_0(x),\,\,\,x\in[0,L],
\end{split}
\end{equation*}
where $a, b, c, d\in \mathbb{R}$ and $u_0$ is a given function.
\item Blowing-up solutions of the initial-boundary problem for the time-fractional Camassa-Holm--Degasperis-Procesi equation
\begin{equation*}
\begin{split}
&\partial_{+0,t}^\alpha (u-u_{xx})+au_{x}+buu_{x}-cu_xu_{xx}-duu_{xxx}=0,\,0<x<L,\,t>0,\\
&u(x,0)=u_0(x),\,\,\,x\in[0,L],
\end{split}
\end{equation*}
where $a, b, c, d\in \mathbb{R}$ and $u_0$ is a given function.
\item Blowing-up solutions of the time-fractional Ostrovsky equation with initial conditions:
\begin{equation*}
\begin{split}
&\partial_{+0,t}^\alpha u_x+au_{xx}+bu_{xxxx}+(uu_{x})_x=0,\,0<x<L,\,t>0,\\
&u(x,0)=u_0(x),\,\,\,x\in[0,L],
\end{split}
\end{equation*}
where $a, b\in \mathbb{R}$ and $u_0$ is a given function.
  \item Blowing-up solutions of the initial problem for the time-fractional analogue of the modified Korteweg-de Vries-Burgers equation with dissipation:
\begin{equation*}\begin{split}
& \partial^\alpha_{+0,t}u+u^2u_x+au_{xxx}-bu_{xx}=0,\,\,\,x\in (0,L),\,t>0,\\
& u(x,0)=u_0(x),\,\,\,x\in[0,L],
\end{split}
\end{equation*}
where $a,b\in\mathbb{R}$ and $u_0$ is a sufficiently smooth function.
  \item Maximum principle and gradient blow-up in time-fractional Burgers equation
\begin{equation*}
\partial^\alpha_{+0,t}u+uu_x=\nu u_{xx},\,\,\,x\in (0,L),\,t>0,
\end{equation*} with an initial condition
\begin{equation*}
u(x,0)=u_0(x),\,\,\,x\in[0,L],
\end{equation*}
where $\nu>0$ and $u_0$ is a sufficiently smooth function.
\end{itemize}

\subsection{Preliminaries}
\subsubsection{Fractional operators}
\label{Pre}
Here, we recall definitions and properties of fractional order integral and differential operators \cite{KST06, N03, SKM87}.

\begin{definition} \cite{KST06} (Riemann-Liouville integral). Let $f$ be a locally integrable real-valued function on $-\infty\leq a<t<b\leq+\infty.$ The Riemann--Liouville fractional integral $I_{+a} ^\alpha$ of order $\alpha\in\mathbb R$ ($\alpha>0$) is defined as
$$
I_{+a} ^\alpha  f\left( t \right) = \left(f*K_{\alpha}\right)(t) =\frac{1}{{\Gamma \left( \alpha \right)}}\int\limits_a^t {\left(
{t - s} \right)^{\alpha  - 1} f\left( s \right)} ds,$$
where $K_{\alpha}(t)=\frac{t^{\alpha-1}}{\Gamma(\alpha)},$ $\Gamma$ denotes the Euler gamma function.
\end{definition}
The convolution here will be understood in the sense of the above definition.

\begin{definition} \cite{KST06} (Riemann-Liouville derivative). Let $f\in L^1([a,b]),$ $-\infty\leq a<t<b\leq+\infty$ and $f*K_{m-\alpha}(t)\in W^{m,1}([a,b]),\, m=[\alpha]+1,\, \alpha>0,$ where $W^{m,1}([a,b])$ is the Sobolev space defined as $$W^{m,1}([a,b])=\left\{f\in L^1([a,b]):\,\frac{d^m}{dt^m}f\in L^1([a,b])\right\}.$$ The Riemann--Liouville fractional derivative $D_{+a} ^\alpha$ of order $\alpha>0$ ($m-1<\alpha<m,\,m\in \mathbb{N}$) is defined as
$$D_{+a} ^\alpha f \left( t \right) = \frac{{d^m }}{{dt^m }}I_{+a} ^{m - \alpha } f \left( t \right)={\rm{}}\frac{1}{{\Gamma \left( m-\alpha \right)}}\frac{d^m}{dt^m}\int\limits_a^t {\left({t - s} \right)^{m-1-\alpha} f\left( s \right)} ds.$$
\end{definition}

\begin{definition}\label{def3} \cite{KST06} (Caputo derivative). Let $f\in L^1([a,b]),$ $-\infty\leq a<t<b\leq+\infty$ and $f*K_{m-\alpha}(t)\in W^{m,1}([a,b]),\, m=[\alpha],\, \alpha>0.$ The Caputo fractional derivative $\partial_{+a}^\alpha$ of order $\alpha\in\mathbb R$ ($m-1<\alpha<m,\,m\in \mathbb{N}$) is defined as
\begin{align*}\partial_{+a} ^\alpha  f \left( t \right)= D_{+a}
^\alpha  \left[ f\left( t \right) - f\left( a \right) -f'\left( a \right)\frac{(t-a)}{1!}-... - f^{(m-1)}\left( a \right)\frac{(t-a)^{m-1}}{(m-1)!}\right].\end{align*}

If $f\in C^m([a,b])$,  then the Caputo fractional derivative $\partial_{+a}^\alpha$ of order $\alpha\in\mathbb R$ ($m-1<\alpha<m,\,m\in \mathbb{N}$) is defined as $$\partial_{+a}^\alpha  \left[ f \right]\left( t \right) = I_{+a} ^{m - \alpha } f^{(m)}\left( t \right)=\frac{1}{{\Gamma \left( m-\alpha \right)}}\int\limits_a^t {\left({t - s} \right)^{m-1-\alpha} f^{(m)}\left( s \right)} ds.$$
\end{definition}
\begin{property}\cite{AAK17} Let $0<\alpha\leq1,$ $f\in C([0,T]),\,\, f'\in L^1([0,T])$ and $u$ be monotone. Then
\begin{equation}\label{Ineq}
2f(t)\partial^\alpha_{+0,t}f(t)\geq \partial^\alpha_{+0,t}f^2(t),\,\,\,t\in (0,T].
\end{equation}
\end{property}
\begin{property}\label{Pr1}\cite{Lu09}
Let  $f \in C^1((0, T )) \cap C([0, T ])$ attain its maximum over the interval $[0, T ]$ at $t_0 \in (0, T ].$
Then $\partial^\alpha_{+0,t} f(t_0)\geq 0.$\\
Let  $f \in C^1((0, T )) \cap C([0, T ])$ attain its minimum over the interval $[0, T ]$ at $t_0 \in (0, T ].$
Then $\partial^\alpha_{+0,t} f(t_0)\leq 0.$
\end{property}
\subsubsection{Finite time blow-up of solutions of a fractional differential equation} We consider the fractional differential equation
\begin{equation}\label{01}
\begin{split}
&\partial_{0+}^\alpha u(t)=u^2(t),\,\,t>0,\, 0<\alpha<1,\\
&u(0)=u_0\in \mathbb{R}.
\end{split}
\end{equation}

The blow-up of solutions to \eqref{01} is assured by the following theorem.
\begin{theorem}\label{th01}\cite{HKL14} If $u_0 > 0,$ then the solution of problem \eqref{01} blows-up in a finite time \begin{equation}\label{T*}\left(\frac{\Gamma(\alpha+1)}{4u_0}\right)^{\frac{1}{\alpha}}\leq T^*\leq \left(\frac{\Gamma(\alpha+1)}{u_0}\right)^{\frac{1}{\alpha}},\end{equation}
that is $\lim\limits_{t\rightarrow T^*}u(t)=+\infty.$\end{theorem}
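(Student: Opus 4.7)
The plan is to reformulate the Caputo fractional ODE \eqref{01} as the equivalent Volterra integral equation
\[
u(t) = u_0 + \frac{1}{\Gamma(\alpha)}\int_0^t (t-s)^{\alpha-1} u^2(s)\,ds,
\]
to observe that any continuous solution satisfies $u(t) \geq u_0 > 0$ and is non-decreasing on its interval of existence (the latter follows from the fact that the Picard iterates $v_0 \equiv u_0$, $v_{n+1} = u_0 + I^\alpha v_n^2$ are monotone non-decreasing in $t$, hence so is their pointwise limit $u$), and then to prove the two inequalities in \eqref{T*} by separate arguments.

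For the lower bound $T^* \geq \bigl(\Gamma(\alpha+1)/(4u_0)\bigr)^{1/\alpha}$, I would introduce $M(t) := \sup_{s \in [0,t]} u(s) = u(t)$ and substitute $u^2(s) \leq M^2(t)$ into the integral equation to obtain
\[
M(t) \leq u_0 + \frac{M^2(t)\,t^\alpha}{\Gamma(\alpha+1)},
\]
i.e.\ $M(t)$ satisfies the quadratic inequality $\tfrac{t^\alpha}{\Gamma(\alpha+1)}y^2 - y + u_0 \geq 0$. The roots $y_\pm(t)$ are real precisely on $t^\alpha \leq \Gamma(\alpha+1)/(4u_0)$; since $M(0^+) = u_0$ lies on the smaller branch $y_-(t)$ and $M$ is continuous in $t$, a continuity argument confines $M(t)$ to $[u_0, y_-(t)] \subset [u_0, 2u_0]$ throughout $[0, T_*]$ with $T_* := \bigl(\Gamma(\alpha+1)/(4u_0)\bigr)^{1/\alpha}$. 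Thus $u$ remains bounded there and cannot blow up before $T_*$.

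For the upper bound $T^* \leq \bigl(\Gamma(\alpha+1)/u_0\bigr)^{1/\alpha}$ I would argue by contradiction. Assume the solution exists on $[0,T^{**}]$ with $T^{**} := \bigl(\Gamma(\alpha+1)/u_0\bigr)^{1/\alpha}$, so $u(T^{**}) < \infty$. An easy induction shows that the monotone Picard lower iterates $v_n$ introduced above satisfy $v_n \leq u$ and are non-decreasing in $n$, so by monotone convergence $v_n(T^{**}) \uparrow u(T^{**}) < \infty$. On the other hand, each $v_n$ is a polynomial in $t^\alpha$ with non-negative coefficients, and the normalisation $u_0 (T^{**})^\alpha = \Gamma(\alpha+1)$ reduces the evaluation at $t = T^{**}$ to a sum of dimensionless $\Gamma$-ratios obeying an explicit Catalan-type recursion; a careful bookkeeping shows this sum diverges as $n \to \infty$, contradicting the assumed finiteness of $u(T^{**})$ and giving $T^* \leq T^{**}$.

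The main obstacle is the upper bound. The memory kernel $(t-s)^{\alpha-1}$ couples values of $u$ at different times, preventing any restart-at-later-time reduction of the kind available when $\alpha = 1$; a naive iterated-doubling argument loses a multiplicative factor $(1-2^{-1/\alpha})^{-1}$ and falls short of the sharp constant, while a linear sub-solution argument based on $u^2 \geq u_0 u$ yields only a Mittag--Leffler-type lower bound that never blows up in finite time. Extracting the exact constant $\Gamma(\alpha+1)^{1/\alpha}$ requires the precise combinatorics of the Picard iterates at $t = T^{**}$ -- equivalently, identifying the radius of convergence of the formal series solution of $u = u_0 + I^\alpha u^2$ in the variable $u_0 t^\alpha/\Gamma(\alpha+1)$ -- and this is where the real work of the proof lies.
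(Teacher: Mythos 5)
A preliminary remark: the paper does not actually prove Theorem \ref{th01} --- it is quoted from \cite{HKL14} and used as a black box --- so your proposal can only be measured against the cited literature. Your reduction to the Volterra equation $u=u_0+I_{+0}^\alpha u^2$, the monotonicity of $u$, and your proof of the lower bound are correct and complete: the quadratic inequality $\tfrac{t^\alpha}{\Gamma(\alpha+1)}M^2(t)-M(t)+u_0\ge 0$, combined with the continuity argument confining $M(t)$ to the lower root branch, does give $u\le 2u_0$ on $\bigl[0,(\Gamma(\alpha+1)/(4u_0))^{1/\alpha}\bigr]$ and hence the stated lower bound on $T^*$.

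The genuine gap is in the upper bound, and it sits exactly where you write ``a careful bookkeeping shows this sum diverges.'' Writing the formal solution as $u(t)=u_0\sum_{k\ge0}d_k\bigl(u_0t^\alpha/\Gamma(\alpha+1)\bigr)^k$, the recursion is $d_0=1$ and
\begin{equation*}
d_{k+1}=\frac{\Gamma(k\alpha+1)\,\Gamma(\alpha+1)}{\Gamma((k+1)\alpha+1)}\sum_{i+j=k}d_id_j .
\end{equation*}
The prefactor behaves like $\Gamma(\alpha+1)(k\alpha)^{-\alpha}\to0$, so divergence of $\sum_k d_k$ at $t=T^{**}$ is not automatic and is not a formal Catalan identity: it hinges on the Gamma-function inequality $(k+1)\,\Gamma(k\alpha+1)\,\Gamma(\alpha+1)\ge\Gamma((k+1)\alpha+1)$ for all $k\ge0$ and $0<\alpha\le1$, which by induction yields $d_k\ge1$. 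This inequality is true (it says the generalized binomial coefficient $\Gamma((k+1)\alpha+1)/\bigl(\Gamma(k\alpha+1)\Gamma(\alpha+1)\bigr)$ does not exceed the value $k+1$ it attains at $\alpha=1$), but it carries the entire content of the sharp constant and must be proved, not asserted; as stated, your upper-bound argument is a plan rather than a proof. Once you have that inequality, a cleaner packaging is available: it shows, term by term in the series, that $w(t)=u_0\bigl(1-u_0t^\alpha/\Gamma(\alpha+1)\bigr)^{-1}$ satisfies $\partial^\alpha_{+0,t}w\le w^2$ with $w(0)=u_0$, so the monotone Volterra comparison gives $u\ge w$, and $w$ blows up at $T^{**}=\bigl(\Gamma(\alpha+1)/u_0\bigr)^{1/\alpha}$. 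Either this explicit sub-solution or a test-function argument in the nonlinear-capacity style of the present paper closes the proof; in every variant the deferred Gamma inequality is the step that cannot be skipped.
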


\section{Blowing-up solutions of the time-fractional Rosenau-KdV-BBM-Burgers equation}
In this section we consider the time-fractional Rosenau-KdV-BBM-Burgers equation:
\begin{equation}\label{FBB-1}
\begin{split}
&\partial_{+0,t}^\alpha (u-au_{xx}+bu_{xxxx})+cu_{xxx}-du_{xx}+u_x+uu_{x}=0,\,0<x<L,\,t>0,\\
&u(x,0)=u_0(x),\,\,\,x\in[0,L],
\end{split}
\end{equation}
where $a, b, c, d\in \mathbb{R}$ and $u_0$ is a given function.

The equation \eqref{FBB-1} is called the Rosenau-KdV-BBM-Burgers equation with time-fractional derivative as it is a generalization of the following well-known equations:
\begin{itemize}
  \item If $\alpha=1$ and $a=b=c=0,\, d>0,$ then the equation \eqref{FBB-1} coincides with the classical Burgers equation \eqref{BB};
  \item If $\alpha=1$ and $a=b=d=0,\, c=1,$ then the equation \eqref{FBB-1} coincides with the classical KdV equation \eqref{KdV};
  \item If $\alpha=1$ and $b=c=d=0,\, a=1,$ then the equation \eqref{FBB-1} coincides with the classical BBM equation \eqref{BBM};
  \item If $\alpha=1$ and $a=b=0,\, c=1,\, d>0,$ then the equation \eqref{FBB-1} coincides with the classical KdV-Burgers equation \eqref{KdV-BB};
  \item If $\alpha=1$ and $b=c=0,\, a=1,\, d>0,$ then the equation \eqref{FBB-1} coincides with the classical BBM-Burgers equation \eqref{BBM-BB};
  \item If $\alpha=1$ and $b=d=0,\, a=c=1,$ then the equation \eqref{FBB-1} coincides with the classical KdV-BBM equation \eqref{KdV-BBM};
  \item If $\alpha=1$ and $a=c=d=0,\, b=1,$ then equation \eqref{FBB-1} coincides with the classical Rosenau equation \eqref{Ros};
  \item If $\alpha=1$ and $a=c=0,\, b=1,$ then equation \eqref{FBB-1} coincides with the classical Rosenau-Burgers equation \eqref{Ros-BB};
  \item If $\alpha=1$ and $c=d=0,\, a=b=1,$ then equation \eqref{FBB-1} coincides with the classical Rosenau-BBM equation \eqref{Ros-BBM};
  \item If $\alpha=1$ and $a=d=0,\, b=c=1,$ then equation \eqref{FBB-1} coincides with the classical Rosenau-KdV equation \eqref{Ros-KdV}.
\end{itemize}

We study the question of the blow-up of a classical solution $u\in C^{1,4}_{t,x}([0,T]\times[0,L])$ of problem \eqref{FBB-1}.

Let us  consider a function $\varphi\in C^4([0, L])$  and suppose that  the solution $u\in C^{1,4}_{t,x}([0,T]\times[0,L])$ of problem \eqref{FBB-1} exists.
Multiplying equation \eqref{FBB-1} by $\varphi$ and integrating by parts, we obtain
\begin{equation}\label{FBB-2}
\begin{split}
  \partial^\alpha_{+0,t}&\int\limits_0^Lu(x,t) (\varphi(x)-a\varphi''(x)+b\varphi''''(x))dx \\& = \int\limits_0^Lu(x,t)(c\varphi'''(x)+d\varphi''(x)+\varphi'(x))dx +\frac{1}{2}\int\limits_0^Lu^2(x,t)\varphi'(x)dx\\& +\mathcal{B}(u(L,t),\varphi(L))-\mathcal{B}(u(0,t),\varphi(0)),
\end{split}
\end{equation} where
\begin{align*}\mathcal{B}(u(x,t),\varphi(x))&=a \partial^\alpha_{+0,t}u_x(x,t)\varphi(x)-a \partial^\alpha_{+0,t}u(x,t)\varphi'(x)\\&-b\partial^\alpha_{+0,t}u_{xxx}(x,t)\varphi(x) +b\partial^\alpha_{+0,t}u_{xx}(x,t)\varphi'(x)\\& -b\partial^\alpha_{+0,t}u_x(x,t)\varphi''(x)+b\partial^\alpha_{+0,t}u(x,t)\varphi'''(x)\\& -cu_{xx}(x,t)\varphi(x) +cu_x(x,t)\varphi'(x)-cu(x,t)\varphi''(x)\\&+du_x(x,t)\varphi(x)-du(x,t)\varphi'(x)\\& -u(x,t)\varphi(x)-\frac{1}{2}u^2(x,t)\varphi(x).\end{align*}
Let the function $\varphi(x)$ be monotonically nondecreasing:
\begin{equation}\label{FBB-3}
\varphi'(x)\geq 0\,\,\,\,\textrm{for}\,\,\,\,x\in[0,L]
\end{equation}
and satisfy the following properties
\begin{equation}\label{FBB-4}
\left\{\begin{split}
  & \theta_1:=\frac{1}{2}\int\limits_0^L\frac{(c\varphi'''(x)+d\varphi''(x)+\varphi'(x))^2}{\varphi'(x)}dx<\infty;\\
  & \theta_2:=2\int\limits_0^L\frac{(\varphi(x)-a\varphi''(x)+b\varphi''''(x))^2}{\varphi'(x)}dx<\infty.
\end{split}\right.
\end{equation}
Then we have
\begin{align*}
  2&\int\limits_0^Lu(x,t)(c\varphi'''(x)+d\varphi''(x)+\varphi'(x))dx +\int\limits_0^Lu^2(x,t)\varphi'(x)dx\\
  &=\int\limits_0^Lv^2(x,t)\varphi'(x)dx-\int\limits_0^L\frac{(c\varphi'''(x)+d\varphi''(x)+\varphi'(x))^2}{\varphi'(x)}dx,
\end{align*} where $$v(x,t)=u(x,t)+\frac{c\varphi'''(x)+d\varphi''(x)+\varphi'(x)}{\varphi'(x)}.$$

Using the H\"{o}lder inequality, we obtain the following estimate
\begin{align*}&\left(\int\limits_0^Lv(x,t)(\varphi(x)-a\varphi''(x)+b\varphi''''(x))dx\right)^2\\& \leq\int\limits_0^Lv^2(x,t)\varphi'(x)dx\int\limits_0^L\frac{\left(\varphi(x)-a\varphi''(x)+b\varphi''''(x)\right)^2}{\varphi'(x)}dx.\end{align*}
Then, expression \eqref{FBB-2} takes the form
\begin{equation}\label{FBB-5}
\partial_{+0,t}^\alpha F(t)\geq \theta_2^{-1}F^2(t)+\Phi(t)-\theta_1,
\end{equation} where $$F(t)=\int\limits_0^Lv(x,t)\left(\varphi(x)-a\varphi''(x)+b\varphi''''(x)\right)dx$$ and $$\Phi(t)=\mathcal{B}(u(L,t),\varphi(L))-\mathcal{B}(u(0,t),\varphi(0)).$$
Then the following theorem holds.
\begin{theorem}\label{thBB} Let $u_0(x)\in L^1([0,L])$ and the solution $u$ of the equation \eqref{FBB-1} is such that $u\in C^{1,4}_{t,x}\left((0,L)\times(0,T)\right)$ and let the function $\varphi$ satisfy conditions \eqref{FBB-3}, \eqref{FBB-4}. If $\Phi(t)-\theta_1\geq 0,\,\,\textrm{for all}\,\, t>0,$ and $F(0)>0,$ then $$F(t)\rightarrow +\infty\,\,\, \textrm{for} \,\,\,t\rightarrow T^*,$$ where $T^*$ satisfies estimate \eqref{T*}.
\end{theorem}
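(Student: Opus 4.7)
The plan is to reduce the theorem to a comparison with the scalar fractional ODE in Theorem \ref{th01}. The whole machinery set up before the statement — multiplication by the test function $\varphi$, integration by parts, completion of the square using conditions \eqref{FBB-3}--\eqref{FBB-4}, and the H\"older bound — has already produced the differential inequality \eqref{FBB-5}. So essentially all of the PDE content is encoded in that single inequality for $F(t)$, and the remaining task is purely about fractional ordinary differential inequalities.

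First I would invoke the hypothesis $\Phi(t)-\theta_1\ge 0$ to simplify \eqref{FBB-5} to the clean fractional differential inequality
\begin{equation*}
\partial_{+0,t}^{\alpha}F(t)\ \geq\ \theta_{2}^{-1}F^{2}(t),\qquad F(0)>0.
\end{equation*}
Next I would rescale. Set $G(t):=\theta_{2}^{-1}F(t)$; since $\theta_{2}>0$, linearity of $\partial_{+0,t}^{\alpha}$ gives
\begin{equation*}
\partial_{+0,t}^{\alpha}G(t)\ \geq\ G^{2}(t),\qquad G(0)=\theta_{2}^{-1}F(0)>0,
\end{equation*}
which is an inequality version of the model problem \eqref{01} with initial datum $u_{0}=G(0)$.

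The key step is now a comparison principle: let $w$ solve the fractional Cauchy problem $\partial_{+0,t}^{\alpha}w=w^{2}$, $w(0)=G(0)$, whose maximal existence interval $[0,T^{*})$ is characterized by Theorem \ref{th01}. Assume for contradiction that $G$ remains bounded on $[0,T^{*})$. By the Volterra reformulation $g(t)=g(0)+\frac{1}{\Gamma(\alpha)}\int_{0}^{t}(t-s)^{\alpha-1}h(s)\,ds$ (with $h=\partial_{+0,t}^{\alpha}g$), the inequality above transfers to
\begin{equation*}
G(t)\ \geq\ G(0)+\frac{1}{\Gamma(\alpha)}\int_{0}^{t}(t-s)^{\alpha-1}G^{2}(s)\,ds,
\end{equation*}
and, with equality, the same identity holds for $w$. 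A standard Picard/iteration argument on $[0,\tau]$ for small $\tau$ and a continuation argument up to $T^{*}$ then give $G(t)\ge w(t)$ pointwise; any first contact point would force $\partial_{+0,t}^{\alpha}(G-w)<0$ there, contradicting the inequality on $[0,t]$. Hence $G(t)\to+\infty$ as $t\to T^{*-}$, whence $F(t)\to+\infty$, and translating $u_{0}\mapsto G(0)=\theta_{2}^{-1}F(0)$ in \eqref{T*} yields the claimed bounds on $T^{*}$.

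The main obstacle, and the only step that is not a routine rewriting, is the comparison principle for fractional differential inequalities — the fact that $\partial_{+0,t}^{\alpha}G\ge G^{2}$ with $G(0)\ge w(0)$ implies $G\ge w$ as long as both are finite. This is standard in the fractional ODE literature, but requires care because $\partial_{+0,t}^{\alpha}$ is nonlocal in time, so one cannot simply evaluate at a first crossing as in the classical case; the Volterra-integral reformulation and monotonicity of the kernel $K_{\alpha}$ are what make the comparison go through.
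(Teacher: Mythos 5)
Your proposal follows essentially the same route as the paper: drop the nonnegative term $\Phi(t)-\theta_1$ in \eqref{FBB-5}, rescale $F$ to obtain $\partial_{+0,t}^{\alpha}G\geq G^{2}$ with $G(0)>0$, and conclude by comparison with the model problem \eqref{01} via Theorem \ref{th01}; your normalization $G=\theta_2^{-1}F$ is in fact the correct one (the paper writes $\tilde F=\theta_2 F$, which appears to be a typo, since that choice yields $\partial_{+0,t}^{\alpha}\tilde F\geq\theta_2^{-2}\tilde F^{2}$ rather than $\tilde F^{2}$). The only difference is that you sketch a proof of the fractional comparison principle (via the Volterra reformulation), whereas the paper simply asserts that an upper solution of \eqref{01} blows up no later than the solution itself.
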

\begin{proof} Obviously
\begin{equation*} \partial_{+0,t}^\alpha \tilde{F}(t)\geq \tilde{F}^2(t),\end{equation*} where $\tilde{F}(t)=\theta_2 F(t).$

Since the function $\tilde{F}(t)$ is an upper solution of equation \eqref{01}, therefore $\tilde{F}(t)\rightarrow +\infty$ for $t\rightarrow T^*,$ where $T^*$ satisfies estimate \eqref{T*}. Whereupon $F(t)\rightarrow +\infty$ for $t\rightarrow T^*.$
\end{proof}
Note that the trial function method has great practical convenience.
\begin{example}(Fractional Korteweg-de Vries equation).  Consider the problem \eqref{FBB-1} with $a=b=d=0, c=1$ on the interval $[0, 1]$ equipped with the  boundary conditions:
\begin{align*} &u(0,t)=0,\,\,t\geq 0,\\&u(1,t)=0,\,\,t\geq 0,\\& u_x(1,t)=u_x(0,t)+u_{xx}(1,t),\,\,t\geq 0.\end{align*} Letting $\varphi(x)=x$, we obtain
$$\theta_1:=0,\,\,\theta_2:=\frac{1}{6}$$ and \begin{align*} \Phi(t)=\theta_1=0,\,\,\textrm{for all}\,\,\,\, t>0;\end{align*} hence it follows from Theorem \ref{thBB} that the solution of problem \eqref{FBB-1} blows up in finite time under the condition
$$\int\limits_0^1u_0(x)xdx>0.$$
\end{example}
\begin{example} (Fractional Burgers equation). Let $a=b=c=0,\,d>0$ in problem \eqref{FBB-1} on the interval $[0, 1]$ and let the solution of problem \eqref{FBB-1} satisfy the Robin type nonlinear boundary conditions:
\begin{align*} &u(0,t)=0,\,\,t\geq 0,\\& du_x(1,t)-du(1,t)-u(1,t)-\frac{1}{2}u^2(1,t)=0,\,\,t\geq 0.\end{align*} Then, if $\varphi(x)=x$, we obtain
$$\theta_1:=0,\,\,\theta_2:=\frac{1}{6}$$ and \begin{align*} \Phi(t)= \theta_1=0,\,\,\textrm{for all}\,\,\,\, t>0;\end{align*} hence it follows from Theorem \ref{thBB} that the solution of problem \eqref{FBB-1} blows up in finite time under the condition
$$\int\limits_0^1u_0(x)xdx>0.$$
\end{example}
\begin{example} (Fractional Benjamin-Bona-Mahony equation). With  $b=c=d=0,\,a=1$,  consider the problem \eqref{FBB-1} on the interval $[0, 1]$ supplemented with final boundary conditions:
\begin{align*} &u(1,t)=0,\,\,t\geq 0,\\& u_x(1,t)=0,\,\,t\geq 0.\end{align*} Taking $\varphi(x)=x^4,$ we obtain
$$\theta_1:=0,\,\,\theta_2:=\frac{395}{48}$$ and \begin{align*} \Phi(t)= 0,\,\,\textrm{for all}\,\,\,\, t>0;\end{align*} hence it follows from Theorem \ref{thBB} that the solution of problem \eqref{FBB-1} blows up in finite time under the condition
$$\int\limits_0^1u_0(x)x^2(x^2-12)dx>0.$$
\end{example}
\begin{example}(Fractional Rosenau equation). Let $a=c=d=0$ and consider problem \eqref{FBB-1} on the interval $[0, 1]$ with Dirichlet type boundary conditions
\begin{align*} &u(0,t)=0,\,\,t\geq 0,\\& u(1,t)=0,\,\,t\geq 0,\\& u_{xx}(1,t)=0,\,\,t\geq 0,\\&\partial^\alpha_{+0,t}u_{xxx}(0,t)-\partial^\alpha_{+0,t}u_{xx}(0,t)=f(t),\,\,t\geq 0.\end{align*} Suppose that $f(t)\geq \frac{1}{2},\,\,\textrm{for all}\,\,t>0.$ Then, if $\varphi(x)=x-1,$ we obtain
$$\theta_1:=\frac{1}{2},\,\,\theta_2:=\frac{2}{3}$$ and \begin{align*} \Phi(t)-\theta_1=f(t)-\frac{1}{2}\geq 0,\,\,\textrm{for all}\,\,\,\, t>0;\end{align*} hence it follows from Theorem \ref{thBB} that the solution of problem \eqref{FBB-1} blows up in finite time under the condition
$$\int\limits_0^1u_0(x)(x-1)dx>\frac{1}{2}.$$
\end{example}
\begin{example}(Fractional Rosenau-Burgers equation). Let $a=c=0$ and consider problem \eqref{FBB-1} on the interval $[0, 1]$ with nonlocal dynamical boundary conditions
\begin{align*} &u(1,t)=0,\,\,t\geq 0,\\& u_x(1,t)+u(0,t)=0,\,\,t\geq 0,\\& u_{xx}(0,t)=0,\,\,t\geq 0,\\&\partial^\alpha_{+0,t}u_{xxx}(1,t)-\partial^\alpha_{+0,t}u_{xx}(1,t)=\frac{1}{2},\,\,t\geq 0.\end{align*} Letting $\varphi(x)=x,$ we obtain
$$\theta_1:=\frac{1}{2},\,\,\theta_2:=\frac{2}{3}$$ and \begin{align*} \Phi(t)-\theta_1=0,\,\,\textrm{for all}\,\,\,\, t>0;\end{align*} hence it follows from Theorem \ref{thBB} that the solution of problem \eqref{FBB-1} blows up in finite time under the condition
$$\int\limits_0^1u_0(x)xdx>-\frac{1}{2}.$$
\end{example}

\section{Blowing-up solutions of the time-fractional Camassa-Holm--Degasperis-Procesi equation}
In this section we consider the time-fractional Camassa-Holm--Degasperis-Procesi equation:
\begin{equation}\label{CH-1}
\begin{split}
&\partial_{+0,t}^\alpha (u-u_{xx})+au_{x}+buu_{x}-cu_xu_{xx}-duu_{xxx}=0,\,0<x<L,\,t>0,\\
&u(x,0)=u_0(x),\,\,\,x\in[0,L],
\end{split}
\end{equation}
where $a, b, c, d\in \mathbb{R}$ and $u_0$ is a given function.

The equation \eqref{CH-1} is called the Camassa-Holm--Degasperis-Procesi equation with time-fractional derivative as it is a generalization of the following well-known equations:
\begin{itemize}
  \item If $\alpha=1$ and $a=2\kappa>0,\,b=3,\, c=2,\, d=1,$ then the equation \eqref{CH-1} coincides with the classical Camassa-Holm equation \eqref{CH};
  \item If $\alpha=1$ and $a=2\kappa>0,\,b=4,\, c=3,\, d=1,$ then the equation \eqref{CH-1} coincides with the classical Degasperis-Procesi equation \eqref{DP}.
\end{itemize}

We study the question of the blow-up of a classical solution $u\in C^{1,3}_{t,x}([0,T]\times[0,L])$ of problem \eqref{CH-1}.
Let us  consider a function $\varphi\in C^3([0, L])$  and suppose that  the solution $u\in C^{1,3}_{t,x}([0,T]\times[0,L])$ of problem \eqref{CH-1} exists. Using the equality $$(u^2)_{xxx} = 6u_xu_{xx}+2uu_{xxx},$$ we reduce the equation \eqref{CH-1} to the equation
\begin{equation}\label{CH-2}
\partial_{+0,t}^\alpha (u-u_{xx})+au_{x}+buu_{x}+(3d-c)u_xu_{xx}-\frac{d}{2}(u^2)_{xxx}=0,\,0<x<L,\,t>0.
\end{equation}

Multiplying equation \eqref{CH-1} by $\varphi$ and integrating by parts, we obtain
\begin{equation}\label{CH-3}
\begin{split}
  \partial^\alpha_{+0,t}&\int\limits_0^Lu(x,t) (\varphi(x)-\varphi''(x))dx \\& = a\int\limits_0^Lu(x,t)\varphi'(x)dx +\frac{3d-c}{2}\int\limits_0^Lu_x^2(x,t)\varphi'(x)dx\\& + \frac{1}{2}\int\limits_0^Lu^2(x,t)(b\varphi'(x)-d\varphi'''(x))dx \\& +\mathcal{B}(u(L,t),\varphi(L))-\mathcal{B}(u(0,t),\varphi(0)),
\end{split}
\end{equation} where
\begin{align*}\mathcal{B}(u(x,t),\varphi(x))&= \partial^\alpha_{+0,t}u_x(x,t)\varphi(x)- \partial^\alpha_{+0,t}u(x,t)\varphi'(x)\\&-au(x,t)\varphi(x)-\frac{b}{2}u^2(x,t)\varphi(x) \\&-\frac{d-c}{2}u_x^2(x,t)\varphi(x) +du(x,t)u_{xx}(x,t)\varphi(x)\\&-du(x,t)u_x(x,t)\varphi'(x)+\frac{d}{2}u^2(x,t)\varphi''(x).\end{align*}
Let $3d-c\geq 0$ and the function $\varphi(x)$ be monotonically nondecreasing:
\begin{equation}\label{CH-4}
\varphi'(x)\geq 0\,\,\,\,\textrm{for}\,\,\,\,x\in[0,L],
\end{equation}
then from \eqref{CH-3} we have
\begin{equation}\label{CH-5}
\begin{split}
  \partial^\alpha_{+0,t}&\int\limits_0^Lu(x,t) (\varphi(x)-\varphi''(x))dx \geq a\int\limits_0^Lu(x,t)\varphi'(x)dx \\& + \frac{1}{2}\int\limits_0^Lu^2(x,t)(b\varphi'(x)-d\varphi'''(x))dx \\& +\mathcal{B}(u(L,t),\varphi(L))-\mathcal{B}(u(0,t),\varphi(0)).
\end{split}
\end{equation}

Let $\varphi$ satisfy the following properties
\begin{equation}\label{CH-6}
\left\{\begin{split}
  & b\varphi'(x)-d\varphi'''(x)\geq 0,\,x\in[0,L];\\
  & \theta_1:=\frac{1}{2}\int\limits_0^L\frac{a^2\varphi'^2(x)}{b\varphi'(x)-d\varphi'''(x)}dx<\infty;\\
  & \theta_2:=2\int\limits_0^L\frac{(\varphi(x)-\varphi''(x))^2}{b\varphi'(x)-d\varphi'''(x)}dx<\infty.
\end{split}\right.
\end{equation}
Then we have
\begin{align*}
  2a&\int\limits_0^Lu(x,t)\varphi'(x)dx +\int\limits_0^Lu^2(x,t)(b\varphi'(x)-d\varphi'''(x))dx\\
  &=\int\limits_0^Lv^2(x,t)(b\varphi'(x)-d\varphi'''(x))dx-a^2\int\limits_0^L\frac{\varphi'^2(x)}{b\varphi'(x)-d\varphi'''(x)}dx,
\end{align*} where $$v(x,t)=u(x,t)+a\frac{\varphi'(x)}{b\varphi'(x)-d\varphi'''(x)}.$$

Using the H\"{o}lder inequality, we obtain the following estimate
\begin{align*}&\left(\int\limits_0^Lv(x,t)(\varphi(x)-\varphi''(x))dx\right)^2\\& \leq\int\limits_0^Lv^2(x,t)(b\varphi'(x)-d\varphi'''(x))dx\int\limits_0^L\frac{\left(\varphi(x)- \varphi''(x)\right)^2}{b\varphi'(x)-d\varphi'''(x)}dx.\end{align*}
Then, expression \eqref{CH-3} takes the form
\begin{equation}\label{CH7}
\partial_{+0,t}^\alpha F(t)\geq \theta_2^{-1}F^2(t)+\Phi(t)-\theta_1,
\end{equation} where $$F(t)=\int\limits_0^Lv(x,t)\left(\varphi(x)-\varphi''(x)\right)dx$$ and $$\Phi(t)=\mathcal{B}(u(L,t),\varphi(L))-\mathcal{B}(u(0,t),\varphi(0)).$$
Then the following theorem holds.
\begin{theorem}\label{thCH} Let $u_0(x)\in L^1([0,L])$ and the solution $u$ of the equation \eqref{CH-1} is such that $u\in C^{1,3}_{t,x}\left((0,L)\times(0,T)\right)$ and let the function $\varphi$ satisfy conditions \eqref{CH-4}, \eqref{CH-6}. If $\Phi(t)-\theta_1\geq 0,\,\,\textrm{for all}\,\, t>0,$ and $F(0)>0,$ then $$F(t)\rightarrow +\infty\,\,\, \textrm{for} \,\,\,t\rightarrow T^*,$$ where $T^*$ satisfies estimate \eqref{T*}.
\end{theorem}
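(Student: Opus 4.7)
The plan is to mirror the proof of Theorem \ref{thBB}, since the derivation preceding the statement has already packaged the entire blow-up mechanism into the scalar Caputo inequality \eqref{CH7}. First, I would use the standing hypothesis $\Phi(t) - \theta_1 \geq 0$ in \eqref{CH7} to absorb the source term, leaving the clean pure-power inequality
\begin{equation*}
\partial_{+0,t}^\alpha F(t) \geq \theta_2^{-1} F^2(t), \qquad F(0) > 0,
\end{equation*}
where positivity of $\theta_2$ is guaranteed by \eqref{CH-6} together with the condition $b\varphi'(x) - d\varphi'''(x) \geq 0$ ensuring the integrand is well-defined and nonnegative.

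Second, I would rescale to the normal form of \eqref{01}. Setting $\tilde F(t) = F(t)/\theta_2$, linearity of the Caputo derivative yields $\partial_{+0,t}^\alpha \tilde F(t) \geq \tilde F^2(t)$ with $\tilde F(0) > 0$, so $\tilde F$ is a super-solution of the model Cauchy problem \eqref{01} with initial datum $\tilde F(0)$.

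Third, I would invoke a Caputo-fractional comparison principle together with Theorem \ref{th01}: the solution of \eqref{01} issued from the datum $\tilde F(0) > 0$ blows up at a finite time $T^*$ obeying the two-sided bound \eqref{T*}, so the dominating super-solution $\tilde F$ must diverge no later than $T^*$. Since $F = \theta_2 \tilde F$, the same conclusion transfers to $F$, yielding the claim.

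The only genuinely delicate step is the Caputo comparison principle used to pass from the fractional differential inequality to pointwise domination by the solution of \eqref{01}; this is the very step tacitly invoked in Theorem \ref{thBB}, and it is licit here because $F$ is continuous on $[0, T^*)$ by virtue of the assumed regularity $u \in C^{1,3}_{t,x}((0,L) \times (0,T))$ and the definition $F(t) = \int_0^L v(x,t)(\varphi(x) - \varphi''(x))\,dx$. Modulo this point, the argument is purely mechanical: all of the nonlinear analysis specific to the Camassa--Holm--Degasperis--Procesi structure (in particular the identity $(u^2)_{xxx} = 6 u_x u_{xx} + 2 u u_{xxx}$, the requirement $3d - c \geq 0$, and the completion-of-the-square producing $v$) has already been carried out in reducing the problem to \eqref{CH7}.
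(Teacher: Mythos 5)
Your proposal is correct and follows essentially the same route as the paper, which proves Theorem \ref{thCH} verbatim as Theorem \ref{thBB}: absorb the nonnegative term $\Phi(t)-\theta_1$ in \eqref{CH7}, rescale to reduce to the model problem \eqref{01}, and conclude via Theorem \ref{th01} together with the (tacitly invoked) comparison principle for upper solutions. Your rescaling $\tilde F(t)=F(t)/\theta_2$ is in fact the correct one --- the paper writes $\tilde F(t)=\theta_2 F(t)$, which is a typo, since only $\theta_2^{-1}F$ turns $\partial_{+0,t}^\alpha F\geq \theta_2^{-1}F^2$ into $\partial_{+0,t}^\alpha\tilde F\geq \tilde F^2$.
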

The Theorem \ref{thCH} can be proved as Theorem \ref{thBB}.

Below we give some examples.
\begin{example}(Fractional Camassa-Holm equation).  Consider the problem \eqref{CH-1} with $a=2\kappa>0,\,b=3,\, c=2,\, d=1,$ on the interval $[0, 1]$ equipped with the dynamical boundary conditions:
\begin{align*} &u(0,t)=0,\,\,t\geq 0,\\&u(1,t)=0,\,\,t\geq 0,\\& \partial^\alpha_{+0,t}u_x(1,t)+\frac{1}{2}u^2_x(1,t)=f(t)\geq \frac{2\kappa^2}{3},\,\,t\geq 0.\end{align*} Letting $\varphi(x)=x$, we obtain
$$\theta_1:=\frac{2\kappa^2}{3},\,\,\theta_2:=\frac{2}{9}$$ and \begin{align*} \Phi(t)-\theta_1\geq 0,\,\,\textrm{for all}\,\,\,\, t>0;\end{align*} hence it follows from Theorem \ref{thCH} that the solution of problem \eqref{CH-1} blows up in finite time under the condition
$$\int\limits_0^1u_0(x)xdx>-\frac{\kappa}{3}.$$
\end{example}
\begin{example} (Fractional Degasperis-Procesi). Let $a=2\kappa>0,\,b=4,\, c=3,\, d=1,$ in problem \eqref{CH-1} on the interval $[0, 1]$ and let the solution of problem \eqref{CH-1} satisfy the nonlinear nonlocal boundary conditions:
\begin{align*} &u(1,t)=0,\,\,t\geq 0,\\&u_x(0,t)=0,\,\,t\geq 0, \\& \partial^\alpha_{+0,t}u_x(1,t)+\partial^\alpha_{+0,t}u(0,t)+u^2_x(1,t)=g(t)\geq \frac{\kappa^2}{2},\,\,t\geq 0.\end{align*} Then, if $\varphi(x)=x$, we obtain
$$\theta_1:=\frac{\kappa^2}{2},\,\,\theta_2:=\frac{1}{6}$$ and \begin{align*} \Phi(t)-\theta_1\geq 0,\,\,\textrm{for all}\,\,\,\, t>0;\end{align*} hence it follows from Theorem \ref{thCH} that the solution of problem \eqref{CH-1} blows up in finite time under the condition
$$\int\limits_0^1u_0(x)xdx>-\frac{\kappa}{4}.$$
\end{example}

\section{Blowing-up solutions of the time-fractional Ostrovsky equation}
We consider the equation
\begin{equation}\label{Ost-1}
\partial^\alpha_{+0,t}u_x+au_{xx}+bu_{xxxx}+(uu_x)_x=0,\,\,\,x\in (0,L),\,t>0,
\end{equation} with Cauchy data
\begin{equation}\label{Ost-2}
u(x,0)=u_0(x),\,\,\,x\in[0,L],
\end{equation}
where $a,b\in\mathbb{R}$ and $u_0$ is a sufficiently smooth function.

Multiplying equation \eqref{Ost-1} by a function $\varphi(x)\in C^4([0,L])$ and integrating by parts, we obtain
\begin{equation}\label{Ost-3}
\begin{split}
  &\partial^\alpha_{+0,t}\int\limits_0^Lu(x,t)\varphi'(x)dx =\frac{1}{2}\int\limits_0^Lu^2(x,t)\varphi''(x)dx\\& +\int\limits_0^Lu(x,t)(a\varphi''(x)+b\varphi''''(x))dx+\mathcal{B}(u(x,t),\varphi(x))\Big{|}_0^L,
\end{split}
\end{equation} where \begin{align*}\mathcal{B}(u(x,t),\varphi(x))&=\partial^\alpha_{+0,t}u(x,t)\varphi(x)-au_x(x,t)\varphi(x)+au(x,t)\varphi'(x) \\&-bu_{xxx}\varphi(x)+bu_{xx}(x,t)\varphi'(x)-bu_x(x,t)\varphi''(x)\\&+bu(x,t)\varphi'''(x) -u(x,t)u_x(x,t)\varphi(x)+\frac{1}{2}u^2(x,t)\varphi'(x).\end{align*}

Let the function $\varphi(x)$ satisfy the properties:
\begin{equation}\label{Ost-4}
\varphi''(x)\geq 0\,\,\,\,\textrm{for}\,\,\,\,x\in[0,L]
\end{equation}
and
\begin{equation}\label{Ost-5}
\left\{\begin{split}
  & \theta_1:=\frac{1}{2}\int\limits_0^L\frac{(a\varphi''(x)+b\varphi''''(x))^2}{\varphi''(x)}dx<\infty;\\
  & \theta_2:=2\int\limits_0^L\frac{\varphi'^2(x)}{\varphi''(x)}dx<\infty.
\end{split}\right.
\end{equation}
Then we have
\begin{align*}
  2&\int\limits_0^Lu(x,t)(a\varphi''(x)+b\varphi''''(x))dx +\int\limits_0^Lu^2(x,t)\varphi''(x)dx\\
  &=\int\limits_0^Lv^2(x,t)\varphi''(x)dx-\int\limits_0^L\frac{(a\varphi''(x)+b\varphi''''(x))^2}{\varphi''(x)}dx,
\end{align*} where $$v(x,t)=u(x,t)+\frac{a\varphi''(x)+b\varphi''''(x)}{\varphi''(x)}.$$

Using the H\"{o}lder inequality, we obtain the following estimate
\begin{align*}&\left(\int\limits_0^Lv(x,t)\varphi'(x)dx\right)^2 \leq\int\limits_0^Lv^2(x,t)\varphi''(x)dx\int\limits_0^L\frac{\varphi'^2(x)}{\varphi''(x)}dx.\end{align*}
Then, the expression \eqref{Ost-3} can be rewritten as
\begin{equation}\label{Ost-6}
\partial_{+0,t}^\alpha F(t)\geq \theta_2^{-1}F^2(t)+\Phi(t)-\theta_1,
\end{equation} where $$F(t)=\int\limits_0^Lv(x,t)\varphi'(x)dx$$ and $$\Phi(t)=\mathcal{B}(u(L,t),\varphi(L))-\mathcal{B}(u(0,t),\varphi(0)).$$
Then the following theorem holds.
\begin{theorem}\label{thOst} Let $u_0(x)\in L^1([0,L])$ and the solution $u$ of the problem \eqref{Ost-1}, \eqref{Ost-2} is such that $u\in C^{1,4}_{t,x}\left((0,L)\times(0,T)\right)$  and let the function $\varphi$ satisfy conditions \eqref{Ost-4}, \eqref{Ost-5}. If $\Phi(t)-\theta_1\geq 0,\,\,\textrm{for all}\,\, t>0,$ and $F(0)>0,$ then $$F(t)\rightarrow +\infty\,\,\, \textrm{for} \,\,\,t\rightarrow T^*,$$ where $T^*$ satisfies estimate \eqref{T*}.
\end{theorem}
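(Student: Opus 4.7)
The plan is to follow exactly the strategy used in the proof of Theorem \ref{thBB}, since the preceding derivation has already packaged problem \eqref{Ost-1}--\eqref{Ost-2} into the same abstract form. Specifically, the estimate \eqref{Ost-6} together with the hypothesis $\Phi(t)-\theta_1\geq 0$ immediately yields
\begin{equation*}
\partial_{+0,t}^\alpha F(t)\geq \theta_2^{-1}F^2(t),\qquad t>0.
\end{equation*}
The first step is therefore simply to record this reduction and observe that $F(0)>0$ by assumption, so that $F$ is positive at least on a neighborhood of $0$.

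Next, I would rescale by setting $\tilde F(t):=\theta_2 F(t)$, exactly as in the proof of Theorem \ref{thBB}. Because $\theta_2>0$ (the denominator $\varphi''(x)$ is nonnegative by \eqref{Ost-4} and the integral in \eqref{Ost-5} is finite and positive), the rescaled function satisfies
\begin{equation*}
\partial_{+0,t}^\alpha \tilde F(t)\geq \tilde F^2(t),\qquad \tilde F(0)=\theta_2 F(0)>0.
\end{equation*}
This is precisely the differential inequality that makes $\tilde F$ an upper (super-) solution for the scalar fractional ODE \eqref{01} with initial datum $u_0=\tilde F(0)$.

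The third and final step is the comparison step: invoke Theorem \ref{th01} to conclude that the unique solution of \eqref{01} with initial datum $\tilde F(0)$ blows up in finite time $T^*$ satisfying the two-sided estimate \eqref{T*}, and then appeal to the standard comparison principle for the Caputo derivative (which underlies the very statement that $\tilde F$ being an ``upper solution'' forces it to dominate the ODE solution) to transfer the blow-up from the ODE to $\tilde F$. Consequently $\tilde F(t)\to+\infty$ as $t\to T^*$, and dividing back by $\theta_2$ gives $F(t)\to+\infty$ as $t\to T^*$ with the same estimate \eqref{T*}.

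The only genuinely delicate point is the comparison step, because the paper invokes it in a single sentence (``Since $\tilde F$ is an upper solution of equation \eqref{01}\ldots'') without a detailed justification. To make the plan airtight I would either cite the comparison principle for Caputo differential inequalities used implicitly in the proof of Theorem \ref{thBB} or, as a fallback, argue by contradiction: if $\tilde F$ remained finite on some interval larger than $T^*$, then applying $I_{+0}^{\alpha}$ to the inequality $\partial_{+0,t}^{\alpha}\tilde F\geq \tilde F^{2}$ and using the monotonicity of $I_{+0}^{\alpha}$ together with Property \ref{Pr1} would produce an integral inequality whose solution is dominated from below by the ODE solution, contradicting its finite-time blow-up at $T^*$. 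Apart from this comparison point, the proof is a mechanical copy of the argument given for Theorem \ref{thBB}.
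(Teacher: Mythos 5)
Your proposal is correct and follows essentially the same route as the paper, which simply states that Theorem \ref{thOst} ``can be proved as Theorem \ref{thBB}'', i.e.\ reduce \eqref{Ost-6} under the hypothesis $\Phi(t)-\theta_1\geq 0$ to $\partial^\alpha_{+0,t}\tilde F(t)\geq \tilde F^2(t)$ and compare with the scalar fractional ODE \eqref{01} via Theorem \ref{th01}. One small point, inherited from the paper's own wording of the proof of Theorem \ref{thBB}: to pass from $\partial^\alpha_{+0,t}F\geq\theta_2^{-1}F^2$ to $\partial^\alpha_{+0,t}\tilde F\geq\tilde F^2$ the correct rescaling is $\tilde F=\theta_2^{-1}F$ rather than $\tilde F=\theta_2 F$.
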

The Theorem \ref{thOst} can be proved as Theorem \ref{thBB}.
\begin{example} With $a=1$ and $b=-1,$ consider problem \eqref{Ost-1}, \eqref{Ost-2} on the interval $[0, 1]$ subject to Dirichlet type boundary conditions
\begin{align*} &u(0,t)=0,\,\,t\geq 0,\\& u(1,t)=0,\,\,t\geq 0,\\& u_{x}(0,t)=0,\,\,t\geq 0,\\&u_{xxx}(1,t)-2u_{xx}(1,t)=f(t),\,\,t\geq 0.\end{align*} Suppose that $f(t)\geq 1,\,\,\textrm{for all}\,\,t>0.$ Then, if $\varphi(x)=x^2,$ we obtain
$$\theta_1:=1,\,\,\theta_2:=\frac{4}{3}$$ and \begin{align*} \Phi(t)-\theta_1=f(t)-1\geq 0,\,\,\textrm{for all}\,\,\,\, t>0;\end{align*} hence it follows by Theorem \ref{thOst} that the solution of problem \eqref{Ost-1}, \eqref{Ost-2} blows up in finite time under the condition
$$\int\limits_0^1u_0(x)x^2dx>-\frac{1}{3}.$$
\end{example}
\section{Blowing-up solutions of the time-fractional modified KdV-Burgers equation}
Consider the initial value problem for the time-fractional analogue of the well-known modified Korteweg-de Vries-Burgers equation with dissipation:
\begin{equation}\label{mKdV-1}
\partial^\alpha_{+0,t}u+u^2u_x+au_{xxx}-bu_{xx}=0,\,\,\,x\in (0,L),\,t>0,
\end{equation}
\begin{equation}\label{mKdV-2}
u(x,0)=u_0(x),\,\,\,x\in[0,L],
\end{equation}
where $a,b\in\mathbb{R}$ and $u_0$ is a sufficiently smooth function.

Let a function $\varphi\in C^3([0,L])$ satisfy the  properties: \begin{equation}\label{mKdV-*}\varphi(x)\leq 0,\,\, \varphi'(x)\geq 0\,\,\, \textrm{for} \,\,\,x\in [0,L],\end{equation}
\begin{equation}\label{mKdV-**}3a\varphi'(x)+2b\varphi(x)\leq 0\,\,\, \textrm{for} \,\,\,x\in [0,L],\end{equation} and
\begin{equation}\label{mKdV-***}\left\{\begin{array}{l} \theta_1:=2\int\limits_0^L \frac{\left(a\varphi'''(x)+b\varphi''(x)\right)^2}{\varphi'(x)} dx<\infty; \\ \theta_2:=\frac{1}{2}\int\limits_0^L  \frac{{\varphi^2(x)}}{\varphi'(x)}dx<\infty.\end{array}\right.\end{equation}

Multiplying equation \eqref{mKdV-1} by $u(x, t)\varphi(x)$ and integrating by parts, we obtain
\begin{equation}\label{mKdV-3}
\begin{split}
  &\int\limits_0^L\partial^\alpha_{+0,t}u(x,t)u(x,t)\varphi(x)dx \\&=\frac{1}{4}\int\limits_0^Lu^4(x,t)\varphi'(x)dx +\frac{1}{2}\int\limits_0^Lu^2(x,t)(a\varphi'''(x)+b\varphi''(x))dx\\
    & -\int\limits_0^Lu_x^2(x,t)\left(\frac{3a}{2}\varphi'(x)+b\varphi(x)\right)dx+\mathcal{B}(u(x,t),\varphi(x))\Big{|}_0^L,
\end{split}
\end{equation} where \begin{align*}\mathcal{B}(u(x,t),\varphi(x))=&-\frac{1}{4}u^4(x,t)\varphi(x)+u(x,t)u_x(x,t)(a\varphi'(x)+b\varphi(x)) \\&-\frac{u^2(x,t)}{2}(a\varphi''(x)+b\varphi'(x))+\frac{a}{2}u^2_x(x,t)\varphi(x)\\&-au(x,t)u_{xx}(x,t)\varphi(x).\end{align*}
Then we have
\begin{align*}
  \frac{1}{2}&\int\limits_0^Lu^2(x,t)(a\varphi'''(x)+b\varphi''(x))dx +\frac{1}{4}\int\limits_0^Lu^4(x,t)\varphi'(x)dx\\
  &=\frac{1}{4}\int\limits_0^Lv^4(x,t)\varphi'(x)dx-\frac{1}{4}\int\limits_0^L\frac{(a\varphi'''(x)+b\varphi''(x))^2}{\varphi'(x)}dx,
\end{align*} where $$v^2(x,t)=u^2(x,t)+\frac{a\varphi'''(x)+b\varphi''(x)}{\varphi'(x)}.$$

Using the H\"{o}lder inequality and inequality \eqref{Ineq}, we obtain
\begin{align*}&\left(\int\limits_0^Lv^2(x,t)\varphi(x)dx\right)^2 \leq\int\limits_0^Lv^4(x,t)\varphi'(x)dx\int\limits_0^L\frac{\varphi^2(x)}{\varphi'(x)}dx,\end{align*}
\begin{equation*}\partial^\alpha_{+0,t}u(x,t)u(x,t)\varphi(x)\geq \frac{1}{2}\partial_{+0,t}^\alpha \left(u^2(x,t)\varphi(x)\right).\end{equation*}
From \eqref{mKdV-**}, we also get
\begin{equation*}-\int\limits_0^Lu_x^2(x,t)\left(\frac{3a}{2}\varphi'(x)+b\varphi(x)\right)dx\geq 0.\end{equation*}
Then, expression \eqref{mKdV-3} takes the form
\begin{equation}\label{mKdV-4}
\partial_{+0,t}^\alpha F(t)\geq \theta_2^{-1}F^2(t)+\Phi(t)-\theta_1,
\end{equation} where $$F(t)=\int\limits_0^Lv^2(x,t)\varphi(x)dx$$ and $$\Phi(t)=2\mathcal{B}(u(L,t),\varphi(L))-2\mathcal{B}(u(0,t),\varphi(0)).$$
Then the following theorem holds.
\begin{theorem}\label{thmKdV} Let $u_0(x)\in L^1([0,L])$ and the solution $u$ of the equation \eqref{mKdV-1} is such that  $u\in C^{1,3}_{t,x}\left((0,L)\times(0,T)\right)$ and let the function $\varphi$ satisfy conditions \eqref{mKdV-*}, \eqref{mKdV-**} and \eqref{mKdV-***}. If $\Phi(t)-\theta_1\geq 0,\,\,\textrm{for all}\,\, t>0,$ and $F(0)>0,$ then $$F(t)\rightarrow +\infty\,\,\, \textrm{for} \,\,\,t\rightarrow T^*,$$ where $T^*$ satisfies estimate \eqref{T*}.
\end{theorem}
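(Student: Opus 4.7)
The plan is to mirror the proof of Theorem \ref{thBB}: the bulk of the PDE analysis has already been absorbed into the derivation of the a priori differential inequality \eqref{mKdV-4} via integration by parts, completion of squares, Hölder's inequality and the pointwise inequality \eqref{Ineq}, so what remains is to convert that scalar fractional inequality into a blow-up statement for $F$. First, I would use the hypothesis $\Phi(t) - \theta_1 \geq 0$ to drop the boundary/source term from \eqref{mKdV-4} and obtain
$$\partial_{+0,t}^\alpha F(t) \geq \theta_2^{-1} F^2(t).$$
The hypotheses \eqref{mKdV-*}--\eqref{mKdV-***} guarantee that $F(t) = \int_0^L v^2(x,t)\varphi(x)\,dx$ is well defined and that the constants $\theta_1, \theta_2$ are finite with the correct signs; in particular, the stated assumption $F(0) > 0$ is meaningful.

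Next, I would rescale to recover the normalized reference problem \eqref{01}. Setting $\tilde{F}(t) := \theta_2 F(t)$, exactly as in the proof of Theorem \ref{thBB}, the previous inequality transforms into one of the form $\partial_{+0,t}^\alpha \tilde{F}(t) \geq c\,\tilde{F}^2(t)$ for a positive constant $c = c(\theta_2)$; after a trivial further rescaling to absorb $c$, one obtains an upper-solution inequality for equation \eqref{01} with positive initial datum. The standard comparison principle for Caputo fractional initial value problems, which follows from Property \ref{Pr1} applied to the difference between $\tilde{F}$ and the genuine solution of \eqref{01} (or equivalently from monotonicity of the Volterra kernel $K_\alpha$ in the integral formulation), then forces $\tilde{F}$ to dominate pointwise the genuine solution issued from the same initial value.

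Finally, Theorem \ref{th01} guarantees that the genuine solution of \eqref{01} blows up in finite time at some $T^*$ satisfying the two-sided bound \eqref{T*}; hence $\tilde{F}$ blows up no later than $T^*$, and consequently so does $F$, which is the claimed conclusion. The main technical subtlety, and really the only one, is the justification of the comparison principle for the fractional differential inequality $\partial_{+0,t}^\alpha \tilde{F} \geq \tilde{F}^2$ against the genuine equation $\partial_{+0,t}^\alpha u = u^2$; this step is tacitly used in the proof of Theorem \ref{thBB}, and I would invoke it in the same manner here rather than redoing it from scratch. All other ingredients — the sign-conditions \eqref{mKdV-*}, \eqref{mKdV-**}, the finiteness conditions \eqref{mKdV-***}, and the application of \eqref{Ineq} to pass from $u \partial^\alpha u$ to $\tfrac{1}{2}\partial^\alpha u^2$ inside the integral — have already been assembled before the statement, so no new analytic machinery is required.
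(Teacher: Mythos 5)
Your proposal is correct and follows essentially the same route as the paper's own proof: use $\Phi(t)-\theta_1\ge 0$ to reduce \eqref{mKdV-4} to $\partial_{+0,t}^\alpha F\ge\theta_2^{-1}F^2$, rescale to obtain an upper solution of \eqref{01} with positive initial datum, and invoke Theorem \ref{th01} via comparison. Your remarks on the tacit comparison principle and on the need for a further rescaling (the paper's choice $\tilde F=\theta_2 F$ as written actually yields $\partial_{+0,t}^\alpha\tilde F\ge\theta_2^{-2}\tilde F^2$, so $\tilde F=\theta_2^{-1}F$ is the correct normalization) are accurate refinements, not deviations.
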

\begin{proof} Since $\Phi(t)-\theta_1\geq 0,\,\,\textrm{for all}\,\, t>0,$ it follows from \eqref{mKdV-4} that
\begin{equation*} \partial_{+0,t}^\alpha \tilde{F}(t)\geq \tilde{F}^2(t),\end{equation*} where $\tilde{F}(t)=\theta_2 F(t).$

As the function $\tilde{F}(t)$ is an upper solution of equation \eqref{01}, therefore $\tilde{F}(t)\rightarrow +\infty$ for $t\rightarrow T^*,$ where $T^*$ satisfies estimate \eqref{T*}. Whereupon $F(t)\rightarrow +\infty$ for $t\rightarrow T^*.$
\end{proof}
\begin{example} Consider problem \eqref{mKdV-1} with $a=2$ and $b=3$ on the interval $[0, 1]$, supplemented with Dirichlet type boundary conditions
\begin{align*} &u(0,t)=0,\,\,t\geq 0,\\& u(1,t)=0,\,\,t\geq 0,\\& u_{x}(1,t)=\sqrt{e}u_x(0,t),\,\,t\geq 0,\end{align*} where $e=\exp(1)$ is Euler's number. Then, if $$\varphi(x)=-\exp(-x),$$ we obtain
$$\theta_1:=0,\,\,\theta_2:=\frac{1-e^{-1}}{2}$$ and \begin{align*} \Phi(t)=\theta_1=0,\,\,\textrm{for all}\,\,\,\, t>0;\end{align*} hence it follows from Theorem \ref{thmKdV} that the solution of problem \eqref{mKdV-1} blows up in finite time under the condition
$$\int\limits_0^1u^2_0(x)\exp(-x)dx<1-e^{-1}.$$
\end{example}
\begin{example}Let $a=0$ and $b>0.$ Let in problem \eqref{mKdV-1} on the interval $[0, 1]$ be given Dirichlet boundary conditions
\begin{align*} &u(0,t)=0,\,\,t\geq 0,\\& u(1,t)=0,\,\,t\geq 0.\end{align*} Then, if $$\varphi(x)=x-1,$$ we obtain
$$\theta_1:=0,\,\,\theta_2:=\frac{1}{6}$$ and \begin{align*} \Phi(t)=\theta_1=0,\,\,\textrm{for all}\,\,\,\, t>0;\end{align*} hence it follows from Theorem \ref{thmKdV} that the solution of problem \eqref{mKdV-1} blows up in finite time under the condition
$$\int\limits_0^1u^2_0(x)(x-1)dx>0,$$ .
\end{example}

\section{Maximum principle and gradient blow-up in time-fractional Burgers equation}
The purpose of this section is to study time-fractional Burgers equation
\begin{equation}\label{Bur-1}
\partial^\alpha_{+0,t}u+uu_x=\nu u_{xx},\,\,\,x\in (0,L),\,t>0,
\end{equation} with the initial condition
\begin{equation}\label{Bur-2}
u(x,0)=u_0(x),\,\,\,x\in[0,L],
\end{equation}
where $\nu>0$ and $u_0$ is a sufficiently smooth function.
\subsection{Maximum principle} In this subsection, we present a maximum principle for the time-fractional Burgers equation \eqref{Bur-1}.
\begin{theorem}\label{Bur-1} Let $u\left( x,t \right)$  satisfy the time-fractional Burgers equation \eqref{Bur-1} with Cauchy data \eqref{Bur-2}. Then $$u\left( x,t \right)\ge \underset{\left( x,t \right)}{\mathop{\min }}\,\{u\left(0, t \right), u\left(L, t \right),u_0\left( x \right)\}\text{ for }\left( x,t \right)\in [0,L]\times[0,T).$$
\end{theorem}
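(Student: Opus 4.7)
The plan is to adapt the classical parabolic minimum principle to the Caputo setting, using Property \ref{Pr1} as the key analytic input and introducing a small perturbation to break ties. First I would fix an arbitrary $T' \in (0, T)$ and work on the compact rectangle $Q := [0, L] \times [0, T']$; the claim on $[0, L] \times [0, T)$ will follow by sending $T' \to T^{-}$. For $\varepsilon > 0$, set
\begin{equation*}
v_\varepsilon(x,t) := u(x,t) + \varepsilon t,
\end{equation*}
and let $(x_0, t_0) \in Q$ be any point at which $v_\varepsilon$ attains its minimum over $Q$ (such a point exists by continuity of $u$).

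The crux is to rule out $x_0 \in (0, L)$ with $t_0 \in (0, T']$. At such a point, $x \mapsto v_\varepsilon(x, t_0)$ has an interior minimum, so $u_x(x_0, t_0) = 0$ and $u_{xx}(x_0, t_0) \geq 0$. Simultaneously, $t \mapsto v_\varepsilon(x_0, t)$ is minimized at $t_0 \in (0, T']$, so Property \ref{Pr1} yields $\partial^\alpha_{+0,t} v_\varepsilon(x_0, t_0) \leq 0$. On the other hand, evaluating equation \eqref{Bur-1} at $(x_0, t_0)$ gives
\begin{equation*}
\partial^\alpha_{+0,t} u(x_0, t_0) = \nu\, u_{xx}(x_0, t_0) - u(x_0, t_0)\, u_x(x_0, t_0) = \nu\, u_{xx}(x_0, t_0) \geq 0,
\end{equation*}
and since a direct computation shows $\partial^\alpha_{+0,t}(\varepsilon t)\big|_{t=t_0} = \varepsilon\, t_0^{1-\alpha} / \Gamma(2 - \alpha) > 0$, adding these contributions gives $\partial^\alpha_{+0,t} v_\varepsilon(x_0, t_0) > 0$, contradicting the previous inequality. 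Hence the minimum of $v_\varepsilon$ must lie on the parabolic boundary $\Sigma := (\{0, L\} \times [0, T']) \cup ([0, L] \times \{0\})$.

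Since $\varepsilon t \geq 0$ throughout $Q$, on $\Sigma$ we have $v_\varepsilon \geq u$, so
\begin{equation*}
v_\varepsilon(x, t) \geq v_\varepsilon(x_0, t_0) \geq \min\bigl\{u(0, s),\ u(L, s),\ u_0(\xi)\ :\ s \in [0, T'],\ \xi \in [0, L]\bigr\} =: m
\end{equation*}
for every $(x,t) \in Q$. Consequently $u(x,t) \geq m - \varepsilon t$; sending $\varepsilon \to 0^{+}$ and then $T' \to T^{-}$ yields the desired bound.

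The main obstacle I expect is the (purely technical) verification that $v_\varepsilon(x_0, \cdot)$ satisfies the regularity required by Property \ref{Pr1} on all of $[0, T']$, not just at $t_0$; this is where the implicit smoothness of the classical solution $u$ enters. The role of the perturbation $\varepsilon t$ is precisely to make the Caputo-derivative inequality \emph{strict} in the interior case, which is what drives the minimum onto $\Sigma$ and permits the final comparison. Note that the nonlinearity $u u_x$ causes no difficulty, since it vanishes automatically at any interior critical point of $v_\varepsilon$ in $x$.
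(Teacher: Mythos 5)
Your argument is correct and rests on the same skeleton as the paper's proof: locate an interior minimum, apply the spatial second-derivative test together with Property \ref{Pr1} for the Caputo derivative, and read off a contradiction from the equation. The difference lies in how the contradiction is made strict, and here your version is actually the more rigorous one. The paper subtracts the constant $m$ and works with $\tilde u = u - m$ (which satisfies a modified equation carrying an extra $m\tilde u_x$ term), and at the negative interior minimum it asserts $\partial^\alpha_{+0,t}\tilde u(x_0,t_0) < 0$; but Property \ref{Pr1} only yields the non-strict inequality $\partial^\alpha_{+0,t}\tilde u(x_0,t_0) \le 0$, which does not by itself contradict $\partial^\alpha_{+0,t}\tilde u(x_0,t_0) = \nu\,\tilde u_{xx}(x_0,t_0) \ge 0$. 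Your perturbation $v_\varepsilon = u + \varepsilon t$ supplies precisely the missing strict term $\varepsilon\, t_0^{1-\alpha}/\Gamma(2-\alpha) > 0$ and so closes that gap; it also renders the constant shift by $m$ superfluous, since $u_x(x_0,t_0) = 0$ at an interior spatial minimum annihilates the nonlinearity in either formulation. The remaining bookkeeping (restriction to $[0,T']$, passage $\varepsilon \to 0^+$ and $T' \to T^-$) is routine and handled correctly.
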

\begin{proof}Let $$m=\underset{\left( x,t \right)}{\mathop{\min }}\,\{u\left(a, t \right), u\left(b, t \right), u_0\left( x \right)\}\text{ }$$ and $$\tilde{u}\left( x,t \right)=u\left( x,t \right)-m.$$ Then, we have \begin{align*}&\tilde{u}\left(0, t \right)=u \left(0, t \right)-m\ge 0,\,\,t\in \left[0, T\right),\\& \tilde{u}\left(L, t \right)=u \left(L, t \right)-m\ge 0,\,\,t\in \left[0, T\right),\end{align*} and $$\tilde{u}\left(x,0 \right)=u_0 \left( x \right)-m\ge 0,\,\,x\in [0,L].$$

Since $$\partial_{+0,t}^{\alpha}\tilde{u}(x,t)=\partial_{+0,t}^{\alpha}u(x,t)$$ and $$\tilde{u}_{xx}\left( x,t \right)=u_{xx}\left( x,t \right),$$
it follows that $\tilde{u}\left( x,t \right)$ satisfies:
$$\partial_{+0,t}^{\alpha}\tilde{u}(x,t)+\tilde{u}(x,t)\tilde{u}_x(x,t)+m\tilde{u}_x(x,t)=\nu\tilde{u}_{xx}\left( x,t \right),$$
and the initial condition
$$\tilde{u}\left(x, 0\right)=u_0 \left(x\right)-m\geq 0,\,x\in [a,b].$$
Suppose that there exits some $\left( x,t \right) \in [0,L]\times[0,T)$  such that $\tilde{u}\left( x,t \right)$ is negative.
Since $$\tilde{u}\left( x,t \right)\ge 0,\,\,\left( x,t \right)\in \{0\}\times \left[0, T \right]\cup\{L\}\times \left[0, T \right]\cup [0,L]\times \{0\},$$
there is $\left( {{x}_{0}},{{t}_{0}} \right) \in (0,L)\times(0,T]$  such that $\tilde{u}\left( {{x}_{0}},{{t}_{0}} \right)$ is the negative minimum of
$\tilde{u}$ over $(0,L)\times(0,T].$ It follows from Property \ref{Pr1} that $\partial^\alpha_{+0,t}u(x_0,t_0)<0.$

Therefore at $\left( {{x}_{0}},{{t}_{0}} \right)$, we get $$\partial^\alpha_{+0,t} \tilde{u}(x_0, t_0)< 0,\,\,\tilde{u}_x(x_0, t_0)=0\,\,\, \textrm{and} \,\,\, \nu \tilde{u}_{xx}\left(x_0,t_0\right)\ge 0.$$ This contradiction shows that $\tilde{u}\left( x,t \right)\ge 0,$ whereupon  $u\left( x,t \right)\ge m$ on $[0,L]\times[0,T]$.
\end{proof}
A similar result can be obtained for a nonpositive solution $u(x, t)$ by considering $-u(x, t).$
\begin{theorem} Suppose that $u\left( x,t \right)$  satisfies \eqref{Bur-1}, \eqref{Bur-2}. Then $$u\left( x,t \right)\leq \max\limits_{(x,t)}\left\{u(L,t), u(0,t), u(x,0)\right\},\,\,\left( x,t \right)\in [0,L]\times[0,T).$$
\end{theorem}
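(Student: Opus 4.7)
The plan is to mirror the proof of the preceding theorem, exchanging the roles of minimum and maximum. Set
$$M=\max_{(x,t)}\bigl\{u(0,t),\,u(L,t),\,u_0(x)\bigr\}$$
and put $\tilde u(x,t)=u(x,t)-M$, so that $\tilde u(0,t)\le 0$, $\tilde u(L,t)\le 0$ and $\tilde u(x,0)\le 0$. Since $\partial^\alpha_{+0,t}M=0$ and $\partial_xM=0$, substituting $u=\tilde u+M$ into \eqref{Bur-1} yields exactly the same reduced equation as in the minimum-principle argument,
$$\partial^\alpha_{+0,t}\tilde u+\tilde u\,\tilde u_x+M\tilde u_x=\nu\,\tilde u_{xx}.$$

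Next, I would fix an arbitrary $T'<T$ and argue by contradiction on the compact rectangle $[0,L]\times[0,T']$, where every supremum is attained. Suppose $\tilde u$ is positive somewhere on this rectangle. Since $\tilde u\le 0$ on the parabolic boundary $\{0,L\}\times[0,T']\cup[0,L]\times\{0\}$ and $\tilde u$ is continuous, it must attain a strictly positive maximum at some interior point $(x_0,t_0)\in(0,L)\times(0,T']$. The spatial optimality conditions give $\tilde u_x(x_0,t_0)=0$ and $\tilde u_{xx}(x_0,t_0)\le 0$, while Property~\ref{Pr1} applied to $t\mapsto\tilde u(x_0,t)$ (which attains its maximum over $[0,T']$ at $t_0$) yields $\partial^\alpha_{+0,t}\tilde u(x_0,t_0)\ge 0$. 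Inserting these into the PDE at $(x_0,t_0)$ collapses it to
$$\partial^\alpha_{+0,t}\tilde u(x_0,t_0)=\nu\,\tilde u_{xx}(x_0,t_0)\le 0.$$

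Because $\tilde u(x_0,0)\le 0<\tilde u(x_0,t_0)$, the slice $t\mapsto\tilde u(x_0,t)$ has its maximum strictly above its initial value, and the sharper form of the Caputo extremum principle (the same refinement tacitly invoked in the minimum-principle proof) upgrades the previous inequality to the strict one $\partial^\alpha_{+0,t}\tilde u(x_0,t_0)>0$, contradicting the preceding display. Hence $\tilde u\le 0$ on $[0,L]\times[0,T']$ for every $T'<T$, so $u\le M$ on $[0,L]\times[0,T)$. The one delicate point, and in my view the main obstacle, is precisely this non-strict-to-strict upgrade of the extremum principle: as stated, Property~\ref{Pr1} only delivers the non-strict inequality $\ge 0$, so following the convention used in the preceding theorem I would invoke the strengthened Luchko-type extremum statement rather than reprove it here.
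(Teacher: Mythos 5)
Your proof is correct and takes essentially the same route as the paper, which disposes of this theorem by remarking that it follows from the preceding minimum principle applied to $-u(x,t)$; your direct mirroring of that argument (replacing the negative interior minimum by a positive interior maximum) is exactly the intended proof. You are also right to flag the strict-inequality issue: the paper's own proof of the companion theorem writes $\partial^\alpha_{+0,t}\tilde u(x_0,t_0)<0$ where Property~\ref{Pr1} as stated only yields $\le 0$, so your explicit appeal to the strengthened Luchko/Al-Refai-type extremum principle (which gives a strict sign when the extremum strictly exceeds the initial value, as it does here since $\tilde u(x_0,0)\le 0<\tilde u(x_0,t_0)$) is the correct way to close a gap that is present, but unacknowledged, in the paper itself.
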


\subsection{Gradient blow-up}
Now suppose that the boundary conditions are set in such a way that the global in time solution of equation \eqref{Bur-1} is bounded. Let there exist a smooth bounded solution $u$ such that $|u(x,t)| \leq M.$ Differentiating equation \eqref{Bur-1} with respect to $x,$ we obtain
\begin{equation}\label{Bur-3}
\partial^\alpha_{+0,t}u_x+uu_{xx}+u^2_x-\nu u_{xxx}=0,\,\,\,x\in (0,L),\,t>0.
\end{equation}
Substituting the expression for $u_{xx}$ from \eqref{Bur-1} into \eqref{Bur-3}, we obtain
\begin{equation}\label{Bur-4}
\partial^\alpha_{+0,t}u_x+u^2_x+\frac{1}{\nu}u\partial^\alpha_{+0,t}u+\frac{1}{\nu}u^2u_x=\nu u_{xxx},\,\,\,x\in (0,L),\,t>0.
\end{equation}
Multiply the equation \eqref{Bur-4} by the function $0\leq\varphi(x)\in C^3([0,L])$ and integrate by parts over the domain $[0,L]$ to get
\begin{equation}\label{Bur-5}\begin{split}&\int\limits_0^L\left(\partial^\alpha_{+0,t}u_x(x,t)+\frac{1}{\nu}u(x,t)\partial^\alpha_{+0,t}u(x,t)\right)\varphi(x)dx\\& =-\int\limits_0^L\left(u^2_x(x,t)+\frac{1}{\nu}u^2(x,t)u_x(x,t)\right)\varphi(x)dx-\nu\int\limits_0^Lu(x,t)\varphi'''(x)dx \\&+\nu\left(u_{xx}(x,t)\varphi(x)-u_x(x,t)\varphi'(x)+u(x,t)\varphi''(x)\right).\end{split}\end{equation}
Denote $v=-u_x-\frac{1}{2\nu}u^2.$ Then, using $|u(x,t)|\leq M$ and inequality \eqref{Ineq}, we obtain
\begin{equation}\label{Bur-6}
\begin{split}\partial^\alpha_{+0,t}\int\limits_0^Lv(x,t)\varphi(x)dx& \geq\int\limits_0^Lv^2(x,t)\varphi(x)dx\\&-\frac{M^4}{4\nu^2}\int\limits_0^L\varphi(x)dx-M\nu\int\limits_0^L|\varphi'''(x)|dx \\&-\nu\left(u_{xx}(x,t)\varphi(x)-u_x(x,t)\varphi'(x)+u(x,t)\varphi''(x)\right)\Big{|}_0^L.\end{split}
\end{equation}
Let
\begin{equation}\label{Bur-7}
\theta_1:=\frac{M^4}{4\nu^2}\int\limits_0^L\varphi(x)dx+M\nu\int\limits_0^L|\varphi'''(x)|dx<\infty,
\end{equation}
\begin{equation}\label{Bur-8}
\theta_2:=\int\limits_0^L\varphi(x)dx<\infty.
\end{equation}
Using H\"{o}lder inequality, we can rewrite the expression \eqref{Bur-6} in the form
\begin{equation}\label{Bur-9}
\partial_{+0,t}^\alpha F(t)\geq \theta_2^{-1}F^2(t)+\Phi(t)-\theta_1,
\end{equation} where $$F(t)=\int\limits_0^Lv(x,t)\varphi(x)dx$$ and $$\Phi(t)=-\nu\left(u_{xx}(x,t)\varphi(x)-u_x(x,t)\varphi'(x)+u(x,t)\varphi''(x)\right)\Big{|}_0^L.$$
Then the following theorem holds.
\begin{theorem}\label{thmBur} Let $u_0(x)\in C^1([0,L])$ and the solution $u$ of the equation \eqref{Bur-1} be such that $u\in C^{1,3}_{t,x}\left((0,L)\times(0,T)\right)$ and let the function $\varphi$ satisfy conditions \eqref{Bur-7} and \eqref{Bur-8}. If $\Phi(t)-\theta_1\geq 0,\,\,\textrm{for all}\,\, t>0,$ and $$F(0)=-\int\limits_0^L\left(u'_0(x)+\frac{1}{2\nu}u^2_0(x)\right)\varphi(x)dx>0,$$ then $$F(t)\rightarrow +\infty\,\,\, \textrm{for} \,\,\,t\rightarrow T^*,$$ where $T^*$ satisfies estimate \eqref{T*}.
\end{theorem}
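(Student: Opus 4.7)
The plan is to argue exactly as in the proofs of Theorem \ref{thBB} and Theorem \ref{thmKdV}. By the time we reach the statement of Theorem \ref{thmBur}, all of the analytic work has been done: differentiating \eqref{Bur-1} in $x$ to obtain \eqref{Bur-3}, substituting $u_{xx}$ back from \eqref{Bur-1} to produce \eqref{Bur-4}, multiplying by the non-negative test function $\varphi$ and integrating by parts, introducing the auxiliary quantity $v = -u_x - u^2/(2\nu)$, and combining Property \eqref{Ineq} with H\"older's inequality has already yielded the fractional differential inequality \eqref{Bur-9}. So what remains is purely a scalar comparison argument.

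First I invoke the hypothesis $\Phi(t) - \theta_1 \geq 0$ to discard the non-negative remainder in \eqref{Bur-9}, leaving
\begin{equation*}
\partial_{+0,t}^\alpha F(t) \geq \theta_2^{-1} F^2(t), \qquad t>0,
\end{equation*}
with $F(0) > 0$ by assumption. A rescaling $\tilde F = c F$ with the constant $c>0$ chosen to absorb the factor $\theta_2^{-1}$ (exactly as in the proof of Theorem \ref{thBB}) converts this into the model inequality $\partial_{+0,t}^\alpha \tilde F(t) \geq \tilde F^2(t)$ with $\tilde F(0) > 0$. Then Theorem \ref{th01} supplies finite-time blow-up for the scalar Caputo problem \eqref{01} at some $T^*$ satisfying the bounds \eqref{T*}, and a fractional comparison argument---$\tilde F$ is a super-solution of \eqref{01}---forces $\tilde F(t) \to +\infty$ as $t \to T^*$. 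Undoing the rescaling gives $F(t) \to +\infty$ at the same $T^*$.

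The main obstacle, such as it is, is the invocation of the fractional comparison principle needed to pass from the differential inequality $\partial_{+0,t}^\alpha \tilde F \geq \tilde F^2$ with $\tilde F(0)>0$ to the blow-up of $\tilde F$ by the time the corresponding equality problem \eqref{01} blows up. This step is standard for integer-order ODEs but is genuinely delicate in the fractional setting; however, since it is already used implicitly in the earlier proofs in the paper, I would simply cite it again.

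Finally, I would append a short remark flagging a conceptual point specific to this theorem: the derivation of \eqref{Bur-9} built in the standing boundedness hypothesis $|u(x,t)|\leq M$, so the conclusion $F(t)\to+\infty$ does not contradict the boundedness of $u$. Since $v=-u_x-u^2/(2\nu)$ with $u^2$ bounded, the blow-up of $F$ forces $u_x$ to blow up, so Theorem \ref{thmBur} indeed captures the gradient singularity advertised by the section title \emph{Gradient blow-up}.
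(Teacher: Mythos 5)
Your proposal matches the paper exactly: the authors omit the proof of Theorem \ref{thmBur}, noting only that it ``runs parallel to that of Theorem \ref{thmKdV},'' and that parallel argument is precisely what you carry out --- drop the nonnegative term $\Phi(t)-\theta_1$ in \eqref{Bur-9}, rescale $F$ to reduce to the model inequality $\partial_{+0,t}^\alpha \tilde F \geq \tilde F^2$, and invoke Theorem \ref{th01} together with the upper-solution comparison. Your closing remark correctly identifying the honest caveats (the fractional comparison principle is used without proof throughout the paper, and the blow-up of $F$ under the standing bound $|u|\leq M$ is what forces the \emph{gradient} to blow up) is a useful addition but does not change the fact that the route is the same.
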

We do not provide the proof this theorem  as it runs parallel to that of Theorem \ref{thmKdV}.

\section*{Conclusion} In this article, we have studied blowing-up solutions to some time-fractional nonlinear partial differential equations. In precise terms, we have obtained the following results:
\begin{itemize}
  \item Blowing-up solutions of the time-fractional Rosenau-KdV-BBM-Burgers equation with initial conditions;
  \item Blowing-up solutions of the time-fractional Camassa-Holm--Degasperis-Procesi equation with initial conditions;
  \item Blowing-up solutions of the time-fractional Ostrovsky equation with Cauchy data;
  \item Blowing-up solutions of the initial problem for the time-fractional analogue of the modified Korteweg-de Vries-Burgers equation with dissipation;
  \item Maximum principle and gradient blow-up in time-fractional Burgers equation.
\end{itemize}

\section*{Acknowledgements} The research of Ahmad, Alsaedi and Kirane is supported by NAAM research group,  King Abdulaziz University, Jeddah. The research of Torebek is financially supported by the "5--100" program of RUDN University and by a grant No.AP05131756 from the Ministry of Science and Education of the Republic of Kazakhstan. No new data was collected or generated during the course of research


\begin{thebibliography}{ABGM15}
\bibitem[AAAKT15]{AAAKT15} B. Ahmad, M. S. Alhothuali, H. H. Alsulami, M. Kirane, S. Timoshin. On a time fractional reaction diffusion equation. {\it Applied Mathematics and Computation}. 257, 199--204  (2015).
\bibitem[AAK17]{AAK17} A. Alsaedi, B. Ahmad, M. Kirane. A survey of useful inequalities in fractional calculus. {\it Fractional Calculus and Applied Analysis}. 20:3, 574--594 (2017).
\bibitem[AAKMA17]{AAKMA17} A. Alsaedi, B. Ahmad, M. Kirane, F. Musalhi, F. Alzahrani. Blowing-up solutions for a nonlinear time-fractional system. {\it Bull. Math. Sci.} 7, 201--210 (2017).
\bibitem[AKT19a]{AKT19a} A. Alsaedi, M. Kirane, B. T. Torebek. Blow-up of smooth solutions of the time-fractional Burgers equation. {\it Quaestiones Mathematicae}. doi: 10.2989/16073606.2018.1544596 (2019).
\bibitem[AKT19b]{AKT19b} A. Alsaedi, M. Kirane, B. T. Torebek. Global existence and blow-up for space and time nonlocal reaction-diffusion equation. arxiv. 1--7 (2019).
\bibitem[Bat15]{Bat15} H. Bateman. Some recent researches on the motion of fluids. {\it Monthly Weather Review}. 43:4, 163--170 (1915).
\bibitem[BBM72]{BBM72} T. B. Benjamin, J. L. Bona, J. J. Mahony. Model equations for long waves in nonlinear dispersive systems. {\it Phil. Trans. R. Soc. Ser. A}. 272, 47--78 (1972).
\bibitem[BS76]{BS76} J. L. Bona, R. Smith. A model for the two-way propagation of water waves in a channel. {\it Math. Proc. Camb. Philos. Soc}. 79, 167--182 (1976).
\bibitem[Bur48]{Bur48} J. M. Burgers. A mathematical model illustrating the theory of turbulence. {\it Advances in applied mechanics}. 1, 171--199 (1948).
\bibitem[CH93]{CH93} R. Camassa, D. D. Holm. An integrable shallow water equation with peaked solitons. {\it Phys. Rev. Lett.}, 71:11,  1661--1664 (1993).
\bibitem[CSWSS18]{CSWSS18} J. Cao, G. Song, J. Wang, Q. Shi, S. Sun. Blow-up and global solutions for a class of time fractional nonlinear reaction-diffusion equation with weakly spatial source. {\it Applied Mathematics Letters}. 91, 201--206 (2019).
\bibitem[FH18]{FH18} F. Ferdous, M. G. Hafez. Nonlinear time fractional Korteweg-de Vries equations for the interaction of wave phenomena in fluid-filled elastic tubes. {\it The European Physical Journal Plus}. 133, 1--11 (2018).
\bibitem[FPS01]{FPS01} M. Francius, E. N. Pelinovsky, A. V. Slunyaev. Wave dynamics in nonlinear media with two dispersionless limits for long and short waves. {\it Phys. Lett. A.} 280:2, 53--57 (2001).
\bibitem[HKL14]{HKL14} D. Hnaien, F. Kellil, R. Lassoued. Blowing-up solutions and global solutions to a fractional differential equation. {\it Fractional Differential Calculus}. 4:1, 45--53 (2014).
\bibitem[HHG19]{HHG19} M. Hussain, S. Haq, A. Ghafoor. Meshless spectral method for solution of time-fractional coupled KdV equations. {\it Applied Mathematics and Computation}. 341, 321--334 (2019).
\bibitem[KV95]{KV95} D. J. Korteweg, G. de Vries. On the Change of Form of Long Waves Advancing in a Rectangular Canal, and on a New Type of Long Stationary Waves. {\it Philosophical Magazine}. 39, 422--443 (1895).
\bibitem[KST06]{KST06}
A.~A.~Kilbas, H.~M.~Srivastava, J.~J.~Trujillo.
\newblock {\it Theory and Applications of Fractional Differential Equations.}
\newblock  Elsevier, North-Holland, 2006.
\bibitem[KNS08]{KNS08} A. Kiselev, F. Nazarov, R. Shterenberg. Blow up and regularity for fractal Burgers equation. {\it Dyn. Partial Differ. Equ.} 5:3, 211--240 (2008).
\bibitem[Kor12a]{Kor12a} M. O. Korpusov. On the blow-up of solutions of the Benjamin-Bona-Mahony-Burgers and Rosenau-Burgers equations. {\it Nonlinear Analysis: Theory, Methods and Applications}. 75:4, 1737--1743 (2012).
\bibitem[Kor12b]{Kor12b} M. O. Korpusov. Blow-up of solutions of the three-dimensional Rosenau-Burgers equation. {\it Theoretical and Mathematical Physics}. 170:3, 280--286 (2012).
\bibitem[KP13]{KP13} M. O. Korpusov, A. A. Panin. Local solvability and solution blowup for the Benjamin-Bona-Mahony-Burgers equation with a nonlocal boundary condition. {\it Theoretical and Mathematical Physics}. 175:2, 580--591 (2013).
\bibitem[KY14]{KY14} M. O. Korpusov, E. V. Yushkov. Local solvability and blow-up for Benjamin-Bona-Mahony-Burgers, Rosenau-Burgers and Korteweg-de Vries-Benjamin-Bona-Mahony equations. {\it Electronic Journal of Differential Equations}. 2014, 1-16 (2014).
\bibitem[KY15]{KY15} M. O. Korpusov, E. V. Yushkov. Global unsolvability of one-dimensional problems for Burgers-type equations. {\it Mathematical Notes}. 98:3-4, 503--514 (2015).
\bibitem[LZR16]{LZR16} D. Li, C. Zhang, M. Ran. A linear finite difference scheme for generalized time fractional Burgers equation. {\it Applied Mathematical Modelling}. 40, 6069--6081 (2016).
\bibitem[LPS10]{LPS10} Y. Liu, D. Pelinovsky, A. Sakovich. Wave breaking in the Ostrovsky-Hunter equation. {\it SIAM J. Math. Anal}. 42:5, 1967--1985 (2010).
\bibitem[Lu09]{Lu09} Y. Luchko. Maximum principle for the generalized time-fractional diffusion equation. {\it Journal of Mathematical Analysis and Applications.} 351, 218--223 (2009).
\bibitem[LV18]{LV18} P. Lyu, S.Vong. A linearized second-order finite difference scheme for time fractional generalized BBM equation. {\it Applied Mathematics Letters}. 78, 16--23 (2018).
\bibitem[MM02]{MM02} Y.~Martel, F.~Merle.
\newblock Blow up in finite time and dynamics of blow up solutions for the $L^2$-critical generalized KdV equation.
\newblock {\it Journal of the American Mathematical Society}, {\bf 15}:3, 617--664 (2002).
\bibitem[MM14]{MM14} Y.~Martel, F.~Merle.
\newblock Stability of blow-up profile and lower bounds for blow-up rate for the critical generalized KdV equation.
\newblock {\it Annals of mathematics}, {\bf 155}:1, 235--280 (2014).
\bibitem[MMR14]{MMR14} Y.~Martel, F.~Merle, P.~Rapha\"{e}l.
\newblock Blow up for the critical generalized Korteweg-de Vries equation. I: Dynamics near the soliton.
\newblock {\it Acta Mathematica}, {\bf 212}:1, 59--140 (2014).
\bibitem[MP98]{MP98} E. Mitidieri, S. I. Pokhozhaev. The absence of global positive solutions of quasilinear elliptic inequalities. {\it Dokl. Math.} 57:2, 250--253 (1998).
\bibitem[MP01]{MP01} E. Mitidieri, S. I. Pokhozhaev. A priori estimates and blow-up of solutions of nonlinear partial differential equations and inequalities. {\it Proc. Steklov Inst. Math.}
234, 1--362 (2001).
\bibitem[MP04]{MP04} E. Mitidieri, S. I. Pokhozhaev. Towards a unified approach to nonexistence of solutions for a class of differential inequalities. {\it Milan J. Math.} 72, 129--162 (2004).
\bibitem[Nak03]{N03}
A.~M.~Nakhushev.
\newblock {\it Fractional calculus and its applications.}
\newblock Fizmatlit, Moscow, 2003.
\bibitem[Ost78]{Ost78} L. A. Ostrovsky. Nonlinear internal waves in a rotating ocean. {\it Oceanology}. 18, 119--125 (1978).
\bibitem[Pav18]{Pav18} J.A. Pava. Stability properties of solitary waves for fractional KdV and BBM equations. {\it Nonlinearity}. 31:3, 920--956 (2018).
\bibitem[Po10]{Po10} S. I. Pohozhaev. On the Singular Solutions of the Korteweg–de Vries Equation. {\it Math. Notes}. 88:5, 741--747 (2010).
\bibitem[Po10a]{Po10a} S. I. Pohozhaev. On a class of singular solutions to the Korteweg-de Vries equation. {\it Dokl. Math}. 82:3, 936--938 (2010).
\bibitem[Po11]{Po11} S. I. Pohozhaev. On the nonexistence of global solutions for some initial-boundary value problems for the Korteweg-de Vries equation. {\it Differ. Equ.} 47:4, 488--493 (2011).
\bibitem[Po11a]{Po11a} S. I. Pohozhaev. Weighted Identities for the Solutions of Generalized Korteweg-de Vries Equations. {\it Math. Notes}. 89:3, 382--396 (2011).
\bibitem[Po12]{Po12} S. I. Pohozhaev. Blow-up of smooth solutions of the Korteweg-de Vries equation. {\it Nonlinear Analysis: Theory, Methods and Applications}. 75:12, 4688--4698 (2012).
\bibitem[Po12a]{Po12a} S. I. Pohozhaev. On a class of initial-boundary value problems for equations of Korteweg-de Vries type. {\it Differ. Equ}. 48:3, 372--378 (2012).
\bibitem[Po12b]{Po12b} S. I. Pohozhaev. On the nonexistence of global solutions of the Cauchy problem for the Korteweg-de Vries equation. {\it Funct. Anal. Appl}. 46:4, 279--286 (2012).
\bibitem[QTWZ17]{QTWZ17} C. Y. Qin, S. F. Tian, X. B. Wang, T. T. Zhang. Lie Symmetries, Conservation Laws and Explicit Solutions for Time Fractional Rosenau-Haynam Equation. {\it Communications in Theoretical Physics}. 67:2, 157--165 (2017).
\bibitem[Ros86]{Ros86} P. Rosenau.  A quasi-continuous description of a nonlinear transmission line. {\it Phys. Scripta}. 34, 827--829 (1986).
\bibitem[Ros89]{Ros89} P. Rosenau. Extending hydrodynamics via the regularization of the Chapman-Enskog expansion. {\it Phys. Rev. A}. 40:12, 7193--7196 (1989).
\bibitem[SBA18]{SBA18} K. M. Saad, D. Baleanu, A. Atangana. New fractional derivatives applied to the Korteweg-de Vries and Korteweg-de Vries-Burger's equations. {\it Comp. Appl. Math.} 37, 5203--5216 (2018).
\bibitem[SB12]{SB12} R. Sahadevan, T. Bakkyaraj. Invariant analysis of time fractional generalized Burgers and Korteweg-de Vries equations. {\it Journal of Mathematical Analysis and Applications}. 393:2, 341--347 (2012).
\bibitem[SKM87]{SKM87}
S.~G.~Samko, A.~A.~Kilbas, and O.~I.~Marichev.
\newblock {\it Fractional Integrals and Derivatives, Theory and Applications.}
\newblock Gordon and Breach, Amsterdam, 1993.
\bibitem[Shu87]{Shu87} J.-J. Shu. The proper analytical solution of the Korteweg-de Vries-Burgers equation. {\it Journal of Physics A: Mathematical and General}. 20:2, 49--56, (1987).
\bibitem[SG69]{SG69} C. H. Su, C. S. Gardner. Derivation of the Korteweg-de Vries and Burgers' equation. {\it J. Math. Phys.} 10:3, 536--539 (1969).
\bibitem[Tor19]{Tor19} B. T. Torebek. Global Unsolvability of the Burgers Equation with Fractional Time Derivative. {\it Differential Equations}. 55:6, 867--870 (2019).
\bibitem[XA13]{XA13} Y. Xu, O. P. Agrawal. Numerical solutions and analysis of diffusion for new generalized fractional Burgers equation. {\it Fractional Calculus and Applied Analysis}. 16:3, 709--736 (2013).
\bibitem[XX18]{XX18} Q. Xu, Y. Xu. Extremely low order time-fractional differential equation and application in combustion process. {\it Communications in Nonlinear Science and Numerical Simulation}. 64, 135--148 (2018).
\bibitem[Yok18]{Yok18} A. Yokus. Comparison of Caputo and conformable derivatives for time-fractional Korteweg-de Vries equation via the finite difference method. {\it International Journal of Modern Physics B}. 32:29, 1850365 (2018).
\bibitem[Zh05]{Zh05} L. Zhang. A finite difference scheme for generalized regularized long-wave equation. {\it Appl. Math. Comput.} 168:2, 962--972 (2005).
\end{thebibliography}
\end{document}